\title{\LARGE \bf
	Distributed Nash Equilibrium Seeking in Non-Monotone Games over the Simplex}
\author{Tatiana Tatarenko, S. Rasoul Etesami
	\thanks{T. Tatarenko is with the Intelligent Systems and Robotics Lab at the TU Darmstadt, Germany (e-mail: \tt\small tatiana.tatarenko@tu-darmstadt.de).}
    \thanks{S.R. Etesami is with the Department of Industrial and Systems Engineering, Coordinated Science Laboratory, University of Illinois Urbana-Champaign, USA  (e-mail: \tt\small etesami1@illinois.edu).}}
\definecolor{darkblue}{rgb}{0.0,0.0,0.6}
\newtheorem{assumption}{Standing Assumption}
\newtheorem{assum}{Assumption}
\newtheorem{definition}{Definition}
\newtheorem{lem}{Lemma}
\newtheorem{prop}{Proposition}
\newtheorem{theorem}{Theorem}
\newtheorem{remark}{Remark}
\newtheorem*{example*}{Example}
\newcommand{\R}{\mathrm{Re}}
\newcommand{\Proj}{\mathrm{Proj}}
\def\R{\mathbb{R}}
\def\hD{\hat{\Delta}^{\tau}}
\def\tx{\tilde{x}}
\def\R{\mathbb{R}}
\def\BibTeX{{\rm B\kern-.05em{\sc i\kern-.025em b}\kern-.08em
		T\kern-.1667em\lower.7ex\hbox{E}\kern-.125emX}}
\def\BibTeX{{\rm B\kern-.05em{\sc i\kern-.025em b}\kern-.08em
    T\kern-.1667em\lower.7ex\hbox{E}\kern-.125emX}}
\begin{document}

\maketitle

\begin{abstract}
In this work, we present a novel characterization of approximate Nash equilibria in a class of convex games over the simplex. To achieve this, we regularize the utility functions using the Shannon entropy term, connect the solutions to the regularized game with the set of Nash equilibria, and formulate a multi-objective optimization problem to solve the regularized game. Based on the obtained properties of the stationary points in this optimization problem, we formulate two distributed heuristic algorithms to compute an approximate Nash equilibrium of the original game.

\emph{Keywords}--Approximate Nash equilibria, convex games, quantal response, entropy regularization, distributed optimization.
\end{abstract}

\vspace{-0.5cm}
\section{Introduction}
Game-theoretic optimization addresses a specific class of problems arising in multi-agent systems, where each agent, called a player, chooses its action from a local action set to maximize its local utility function. These utility functions are coupled through the decision variables (actions) of all players in the system. Applications of game-theoretic optimization can be found in areas such as electricity markets, communication networks, autonomous driving systems, and smart grids \cite{Alpcan2005, GTM, BasharSG}. Solving a game-theoretic optimization problem involves finding a stable joint action, called a \emph{Nash equilibrium}, from which no agent has any incentive to unilaterally deviate. 

In the case of \emph{convex games}, Nash equilibria are equivalent to solutions of a specific variational inequality defined for the game's pseudo-gradient over the joint action set \cite{FaccPang1}. Such variational inequalities can be efficiently solved under the assumption that the pseudo-gradient possesses monotonicity properties~\cite{LanExtrapolation, Nesterov}, or when a \emph{Minty solution} exists in the game~\cite{He}. In recent years, much work has been devoted to the development of \emph{distributed procedures} that leverage only local agents' information to achieve a solution in such games~\cite{gao2021second, pethick2023solving, TCNS24, ecc24}.

While monotone games or games with a Minty solution can be efficiently solved either centrally or in a distributed setting, solution approaches for non-monotone convex games are not well-studied. Moreover, it is known that the problem of approximating a mixed Nash equilibrium in a general two-player, non-zero sum normal form game is PPAD-hard (see~\cite{Chen})\footnote{PPAD-hard problems belong to the class PPAD (Polynomial Parity Argument, Directed version) introduced by Papadimitriou in 1991. For more details on this class, see~\cite{Chen} and the references therein.}. The recent paper~\cite{anagnostides2022optimistic} studies the \emph{optimistic mirror descent} algorithm and demonstrates that it either converges to a Nash equilibrium or that the average correlated distribution of play is a strong \emph{coarse correlated equilibrium}. Some works have addressed a specific subclass of such games, namely \emph{Rank-1 games}, where a Nash equilibrium can be solved using polynomial-time algorithms~\cite{patris2024learning}.

Despite the well-established impossibility results, current works continue to seek heuristics for circumventing computational obstacles in general convex games. In particular, the work~\cite{pmlr-v97-raghunathan19a} presents an optimization approach based on the specifically defined Nikaido-Isoda function to calculate stable states in unconstrained games. The paper~\cite{gemp2024approximating}, in turn, formulates a non-convex stochastic optimization problem to approximate a Nash equilibrium in normal-form games. The main idea in~\cite{gemp2024approximating} involves the regularization of locally convex utility functions using the Shannon entropy term, where the resulting regularized game possesses Nash equilibria. Moreover, each Nash equilibrium in the regularized game is an approximate solution to the original one and belongs to the interior of the joint action set. This condition implies a reformulation of the corresponding variational inequality as a nonlinear system of equations, which leads to the formulation of a non-convex optimization problem. 

\subsection{Contributions and Organization}
Inspired by the idea in \cite{gemp2024approximating}, we continue \emph{characterizing approximate Nash equilibria in a class of convex games over the simplex} and formulate novel \emph{distributed heuristic algorithms} to identify such points. Our contributions are summarized as follows:
 
\begin{itemize}
    \item We formulate a multi-objective optimization problem in the regularized game (see~\eqref{eq:MOprob}), where each player controls a number of objective functions, and provide a result characterizing approximate Nash equilibria ($\epsilon$-NE) as points at which the gradients of the objectives are equal to zero (Lemmas~\ref{lem:MO}-\ref{lem:lem1});
    \item We further analyze the set of common stationary points of the introduced multi-objective optimization problem and demonstrate that this set is equal to the set of $\epsilon$-NE in the game, given an appropriate choice of the regularization parameter (Lemma~\ref{lem:stationary});
    \item Based on the results above, we propose two heuristics aimed at approximating Nash equilibria in the game under consideration.
\end{itemize}

The paper is organized as follows. Section~\ref{sec:II} defines the problem of game-theoretic optimization in a class of convex games over the simplex, introduces the notion of an approximate Nash equilibrium ($\epsilon$-NE), and presents important preliminary results on the regularized game obtained in the work~\cite{gemp2024approximating}. Section~\ref{sec:III} formulates a multi-objective optimization problem, whose solution set is equal to the set of approximate Nash equilibria, and analyzes the stationary points of the objective functions. Section~\ref{sec:IV} leverages this analysis and presents two heuristic algorithms for computing an approximate Nash equilibrium. Section~\ref{sec:V} presents some simulation results, whereas Section~\ref{sec:VI} concludes the paper.

\subsection{Notations}

The set $\{1, \ldots, N\}$ is denoted by $[N]$. For any function $f(x) : K \to \mathbb{R}$ and a column vector $x = (x_1, \ldots, x_N)$, where $x_i \in \mathbb{R}^{n_i}$, we define the partial derivative $\nabla_{i} f(x) = \frac{\partial f(\mathbf{x})}{\partial x_i}$ as the derivative with respect to the $x_i$-th coordinate of the vector $x = (x_1, \ldots, x_N) \in \mathbb{R}^{\sum_{i=1}^N n_i}$. We use $\Proj_{\Omega}(v)$ to denote the projection of a vector $v \in \mathbb{R}^d$ onto a set $\Omega \subseteq \mathbb{R}^d$. We denote the dot product in $\mathbb{R}^n$ by $\langle \cdot, \cdot \rangle$.

\section{Optimization-based Reformulation for Concave Games over the Simplex}\label{sec:II}
We consider a noncooperative game denoted by $\Gamma = \Gamma(N, \{\Delta_i\}, \{f_i\})$, consisting of $N$ players, each with their local action set $\Delta_i$ and local utility function $f_i : \Delta = \Delta_1 \times \dots \times \Delta_N \to \mathbb{R}$.
We denote by $x_i = (x_i^1, \ldots, x_i^{n_i+1})$ an action of player $i$ chosen from $\Delta_i$.
We let $x = (x_i, x_{-i})$ denote an action profile from the joint action set, where $x_{-i}$ represents the joint action of all players except player $i$.
In this work, we focus on games where each action set $\Delta_i$ and utility function $f_i$ possess the following structure.

\begin{assumption}\label{assum:convex1}
Each action set $\Delta_i$ is the $(n_i+1)$-dimensional simplex defined as follows: 
\[
\Delta_i = \Big\{ x_i = (x_i^1, \ldots, x_i^{n_i+1}) \in \mathbb{R}_+^{n_i+1} : \ \sum_{j=1}^{n_i+1} x_i^j = 1
\Big\}.\]
Each utility function $f_i(x)$, for $i \in [N]$, is a polynomial function of degree 2 and has the form
\[
f_i(x) = \sum_{k \neq i} x_i^T Q_{ik} x_k + \sum_{j=1}^N r_j^T x_j,
\]
where each $Q_{ik}$ is an $(n_i+1) \times (n_k+1)$ dimensional matrix with the $(j,l)$-entry being $q_{ik}^{jl}$, for $j = 1, \ldots, n_i+1$ and $l = 1, \ldots, n_k+1$, and $r_i = (r_i^1, \ldots, r_i^{n_i}) \in \mathbb{R}^{n_i}$ is a vector. 
\end{assumption}
\begin{example*} The following games satisfy Assumption~\ref{assum:convex1} above:
\begin{enumerate}
    \item Any game with two players and finite action sets in mixed strategies is an example of a game over the simplex that satisfies Assumption~\ref{assum:convex1}.
    \item Another example of a game over the simplex where Assumption~\ref{assum:convex1} holds is stable matching games in mixed strategies with \emph{waiting list feedback}, as presented in \cite{arxiv_Matching} (see equation (30) therein).
\end{enumerate}
 \end{example*}

 \begin{definition}\label{def:NE}
A joint action profile $x^*\in\Delta$ is a \emph{Nash equilibrium} (NE) in the game $\Gamma$ iff $f_i(x^*_i,x^*_{-i})\ge f_i(x_i, x^*_{-i})$, which holds for any $x_i\in\Delta_i$ and $i\in[N]$. A joint action  $x^*\in\Delta$ is an \emph{$\epsilon$-Nash equilibrium} ($\epsilon$-NE) in the game $\Gamma$ iif $f_i(x^*_i,x^*_{-i})\ge f_i(x_i, x^*_{-i}) - \epsilon$, which holds for any $x_i\in\Delta_i$ and $i\in[N]$.
\end{definition}

In this work, we are interested in developing distributed algorithms to obtain an $\epsilon$-NE, also called approximate Nash equilibrium, in the class of games satisfying Assumption~\ref{assum:convex1}. We note that, although the utility functions have a quadratic form and belong to the class of concave games,\footnote{A continuous action game is said to be concave (convex) if the payoff of each player is a concave (convex) function of their own action \cite{rosen1965existence}.}  we do not assume anything about the positive definiteness of the matrices. As a result, the games under consideration are neither monotone nor belong to the class of potential games, which presents one of the major challenges in obtaining their NE points. 

 \begin{remark}\label{rem:compl}
     It is important to note that since the class of two-player nonzero-sum normal-form games in mixed strategies is a special case of the games under consideration, we cannot expect an efficient calculation of the $\epsilon$-NE points, as the latter problem is known to be PPAD-hard (see~\cite{Chen}). Thus, the main goal of this paper is to present a novel characterization of the $\epsilon$-NE points for games that satisfy Assumption \ref{assum:convex1}, based on which heuristic algorithms can be developed.  
 \end{remark}

Let $F(x)$ denote the pseudo-gradient of the game $\Gamma$, i.e., 
\[F(x) = [\nabla_{1}f_1(x), \ldots, \nabla_{N}f_N(x)]^T\in\R^d,\]
where $d = \sum_{i=1}^N(n_i+1)$.
To find a NE in a game satisfying Assumption~\ref{assum:convex1}, one can aim at solving the following variational inequality (see \cite[Corollary 2.2.5]{FaccPang1} for the equivalence between NE points in convex games and solutions to variational inequalities): 
\begin{align}\label{eq:VI0}
    \mbox{Find $x^*\in\R^d$: }\langle F(x^*),x-x^*\rangle\le 0\, \mbox{ for any $x\in\Delta$}.
\end{align}
We note that a solution to the variational inequality~\eqref{eq:VI0} always exists, as $\Delta$ is a compact set. We refer to~\cite{gao2021second, pethick2023solving, TCNS24, ecc24} for different gradient-based solution approaches for solving \eqref{eq:VI0} under some specific assumptions regarding the mapping $F(\cdot)$, such as (strong) monotonicity or existence of a so called Minty solution.

In convex games where the pseudo-gradient does not possess one of the properties above, the gradient-based procedure may diverge~\cite{GRAMMATICO2018186}. To rectify this issue, one can turn attention to an optimization-based approach for NE computation. However, all known reformulations for games with constraints are represented as non-convex optimization problems, and convergence to a stationary point of the objective function can only be guaranteed, which does not always coincide with the set of NE points~\cite{gemp2024approximating, pmlr-v97-raghunathan19a}. In this work, we will present a non-convex multi-objective optimization problem where common stationary points of the objective functions are the set of $\epsilon$-NE in $\Gamma$.    

\subsection{Quantal Response Equilibria}

Following the approach presented in \cite{gemp2024approximating}, we aim to reformulate the NE seeking problem as a specific optimization problem. As discussed in \cite{gemp2024approximating}, to capture all equilibria (i.e., interior NE where \( x^* \in \text{int} \Delta \) and boundary NE where \( x^* \in \partial \Delta \)), we focus on the \emph{quantal response equilibria} defined as follows.

\begin{definition}\label{def:QRE}
A joint action $x_{\tau}^*\in\Delta$ is a \emph{quantal response equilibrium} (QRE) in the game $\Gamma$ iff it is a NE in the game $\Gamma^{\tau} = \Gamma^{\tau} (N,\{\Delta_i\},\{f^{\tau}_i(x) =f_i(x) - \tau \sum_{j=1}^{n_i+1}x_i^j\ln(x_i^j)\})$, where $\tau>0$.
The set of quantal response equilibria in the game $\Gamma$ is denoted by QRE($\tau$).
\end{definition}

To define a QRE, one perturbs the initial utility functions \( f_i \), \( i \in [N] \), by the Shannon entropy term
\begin{align}\label{eq:Shan}
S(x_i) = -\sum_{j=1}^{n_i+1} x_i^j \ln(x_i^j),
\end{align}
which is a strongly concave function over \( \Delta_i \). Thus, the perturbed game \( \Gamma^{\tau} \) admits a NE, and moreover, any NE in \( \Gamma^{\tau} \) belongs to the set QRE(\( \tau \)).

Next, we state a result from \cite{gemp2024approximating}, which shows that any \( x \in \text{QRE}(\tau) \) belongs to \( \text{int} \Delta \), specifies a lower bound for the coordinates of \( x \), and demonstrates that \( x \) must be an \( \epsilon \)-NE in \( \Gamma \) for a specific choice of the parameter \( \tau \) that satisfies \( \epsilon \geq \tau \max_i \{\ln (n_i + 1)\} \). The proof of the following lemma can be found in \cite[Lemmas 11 and 12]{gemp2024approximating}.

\begin{lem}\label{lem:epsApp} The following statements hold for the game $\Gamma$:
\begin{enumerate}
    \item Given a positive $\tau>0$, we have that  $\mbox{QRE}(\tau)\subseteq\mbox{int}\Delta$.
    \item Let $\tau \le \frac{\epsilon}{\max_i\{\ln (n_i+1)\}}$ for any fixed $\epsilon>0$.
    Then, any $x\in \mbox{QRE}(\tau)$ is an $\epsilon$-NE in the game $\Gamma$. 
    \item Each coordinate of $x\in \mbox{QRE}(\tau)$ is lower-bounded by the value $x_{\min}=\frac{\exp(-\delta_f/\tau)}{\max_i (n_i+1)}$, where $\delta_f = \max_{i\in[N]}(\max_{x\in\Delta}f_i(x) - \min_{x\in\Delta}f_i(x))$.
\end{enumerate}
\end{lem}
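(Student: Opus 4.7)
The plan is to treat each of the three claims with a separate argument, all of them resting on the interior-KKT conditions for the Shannon-regularized problem.

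For part~(1), I would look at a single player's subproblem in $\Gamma^\tau$: given $x_{-i}^*$, $x_i^*$ maximizes the strictly concave objective $f_i(x_i, x_{-i}^*) + \tau S(x_i)$ over the simplex $\Delta_i$. The key observation is that the gradient of the entropy term, $-\tau(1 + \ln(x_i^j))$, diverges to $+\infty$ as $x_i^j \to 0^+$. Writing the KKT stationarity condition for this concave program with simplex constraint (one Lagrange multiplier $\lambda_i$ for $\sum_j x_i^j = 1$ and nonnegative multipliers $\mu_i^j$ for $x_i^j \ge 0$), any coordinate on the boundary would force the gradient condition to fail. Hence the multiplier $\mu_i^j$ must vanish and $x_i^{*,j} > 0$ for all $j$, establishing $\mbox{QRE}(\tau) \subseteq \mbox{int}\,\Delta$.

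For part~(2), I would simply exploit the definition of QRE: $x^* \in \mbox{QRE}(\tau)$ means $f_i(x_i^*,x_{-i}^*) + \tau S(x_i^*) \ge f_i(x_i, x_{-i}^*) + \tau S(x_i)$ for every $x_i \in \Delta_i$. Rearranging yields
\[
f_i(x_i^*, x_{-i}^*) \ge f_i(x_i, x_{-i}^*) - \tau\bigl(S(x_i) - S(x_i^*)\bigr).
\]
On the simplex $\Delta_i$ the Shannon entropy is bounded in $[0,\ln(n_i+1)]$, so $S(x_i) - S(x_i^*) \le \ln(n_i+1)$. Choosing $\tau \le \epsilon/\max_i\ln(n_i+1)$ makes the right-hand deficit at most $\epsilon$, which is exactly the $\epsilon$-NE condition.

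Part~(3) is the most substantive step and where I expect the main work. Since $x^* \in \mbox{int}\,\Delta$ by part~(1), the KKT conditions reduce to the stationarity equation $[\nabla_i f_i(x^*)]_j - \tau(1 + \ln x_i^{*,j}) = \lambda_i$ for every $j$. Solving and imposing $\sum_j x_i^{*,j} = 1$ gives the softmax form
\[
x_i^{*,j} = \frac{\exp\bigl([\nabla_i f_i(x^*)]_j / \tau\bigr)}{\sum_{l=1}^{n_i+1}\exp\bigl([\nabla_i f_i(x^*)]_l / \tau\bigr)},
\]
so $x_i^{*,j} \ge \frac{1}{n_i+1}\exp\!\bigl(-(\max_l[\nabla_i f_i(x^*)]_l - \min_l[\nabla_i f_i(x^*)]_l)/\tau\bigr)$. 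To finish, I would convert the range of the partial gradient into the range of $f_i$ itself. Under Assumption~\ref{assum:convex1}, $f_i$ is linear in $x_i$ with $x_{-i}$ fixed, so $[\nabla_i f_i(x^*)]_j = f_i(e_j, x_{-i}^*) + c(x_{-i}^*)$ for a constant $c(x_{-i}^*)$ that cancels in differences. Hence
\[
\max_l[\nabla_i f_i(x^*)]_l - \min_l[\nabla_i f_i(x^*)]_l = \max_l f_i(e_l, x_{-i}^*) - \min_l f_i(e_l, x_{-i}^*) \le \delta_f,
\]
which plugged back gives the bound $x_i^{*,j} \ge \exp(-\delta_f/\tau)/\max_i(n_i+1)$. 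The only delicate point is this last reduction from gradient range to function range; it is where the quadratic (in fact, bilinear across players) structure of Assumption~\ref{assum:convex1} is genuinely used, and it is what I expect to be the crux of the argument.
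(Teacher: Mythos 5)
The paper does not actually prove this lemma; it imports it wholesale from the cited reference (Lemmas 11 and 12 of the Gemp et al.\ paper), so there is no in-paper proof to compare against. Your argument is the standard one for logit/quantal-response equilibria and is correct in substance: entropy-gradient blow-up at the boundary for part (1), boundedness of $S$ on the simplex for part (2), and the interior stationarity condition yielding the softmax form, combined with the fact that $f_i$ is linear in $x_i$ under Assumption~1 so that the range of $\nabla_i f_i$ equals the range of $f_i(e_l,x_{-i}^*)$ over vertices and is hence at most $\delta_f$, for part (3) --- the last step being exactly the right place to use the bilinear structure. Two small blemishes: in part (2) your rearrangement has the entropy difference with the wrong sign (it should read $f_i(x^*)\ge f_i(x_i,x^*_{-i})-\tau\bigl(S(x_i^*)-S(x_i)\bigr)$), which is harmless since $|S(x_i^*)-S(x_i)|\le\ln(n_i+1)$ either way; and in part (1) the KKT framing is slightly loose because the entropy term is not differentiable at the boundary of $\Delta_i$, so the clean version of your argument is the direct perturbation: if $x_i^{*,j}=0$, shifting mass $\delta$ onto coordinate $j$ changes $f_i$ by $O(\delta)$ but increases $\tau S$ by $\tau\delta\ln(1/\delta)+O(\delta)$, contradicting optimality for small $\delta$.
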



\subsection{Optimization problem for QRE seeking}
Now, we intend to formulate an optimization problem whose solutions capture the set \(\text{QRE}(\tau)\) in \(\Gamma\). According to Lemma~\ref{lem:epsApp}, this set coincides with the \(\epsilon\)-NE in \(\Gamma\), where \(\epsilon\) can be chosen arbitrarily small, given an appropriate setting for the parameter \(\tau\).

For the sake of analytical convenience, we reformulate the action sets and utility functions in the game \(\Gamma\) by expressing the coordinates \(x_{n_i+1}\), \(i \in [N]\), as follows:
\begin{align}\label{eq:tr}
x_{n_i+1} = 1 - \sum_{j=1}^{n_i}x_j \, \mbox{ for each $i\in[N]$}.
\end{align}
Therefore, in the subsequent analysis, we consider the games \(\Gamma^t\) and \(\Gamma^{\tau,t}\), which are obtained by applying the \emph{transformation}~\eqref{eq:tr} to the games \(\Gamma\) and \(\Gamma^{\tau}\), respectively. In particular, the action sets in these transformed games are given by
\begin{align}\label{eq:set_t}
    &\Delta^{t} = \times_{i=1}^N\Delta_i^{t},\cr
    &\Delta_i^{t} = \Big\{x_i=(x_i^1,\ldots,x_i^{n_i})\in\R^{n_i}_+:
     \sum_{j=1}^{n_i} x_i^j\le1 \Big\}.
\end{align}
Moreover, each utility function in the game $\Gamma^{\tau,t}$ has the form 
\begin{align}\label{eq:ut_t}
    f^{\tau,t}_i(x) = f_i^t(x)&-\tau \sum_{j=1}^{n_i}x_i^j\ln(x_i^j) \cr
    &-\tau(1-\sum_{j=1}^{n_i}x_i^j)\ln(1-\sum_{j=1}^{n_i}x_i^j),
\end{align}
for $x\in\Delta^t$, where \(f_i^t\) is the utility function in the game \(\Gamma^t\), obtained from the function \(f_i\) by applying the transformation~\eqref{eq:tr}. 
Taking into account Assumption~\ref{assum:convex1} and the linearity of the transformation~\eqref{eq:tr}, we conclude that the functions \(f_i^t : \Delta^t \to \mathbb{R}\), \(i \in [N]\), have the following structure:\footnote{The utility structures in \eqref{eq:ut_t1} are valid up to a constant shift, which does not affect the subsequent optimizations.}
\begin{align}\label{eq:ut_t1}
    f^t_i(x) = \sum_{k\ne i} x_i^T\hat Q_{ik}x_k  + \sum_{j=1}^N \hat r_j^Tx_j,
\end{align}
where each $\hat Q_{ik}$ is an $n_i\times n_k$ dimensional matrix with the $(j,l)$-entry equal to $\hat q_{ik}^{jl}$, $j=1,\ldots, n_i$, $l=1,\ldots, n_k$, ($\hat q_{ik}^j$ is the $j$th row in the matrix $\hat Q_{ik}$) and $\hat r_i = (\hat{r}_i^1,\ldots,\hat{r}_i^{n_i}) \in \R^{n^i}$ is a vector.

Given the equivalence between \(\Gamma\), \(\Gamma^{\tau}\), and \(\Gamma^t\), \(\Gamma^{\tau,t}\), henceforth, we focus on the solutions to the game \(\Gamma^{\tau,t}\), which are approximate NE in \(\Gamma^t\) and, thus, in the original game \(\Gamma\).  
For this purpose, let us consider the \emph{pseudo-gradient} \(F^{\tau,t} : \Delta^t \to \mathbb{R}^{\sum_{i \in [N]} n_i}\) of the game \(\Gamma^{\tau,t}\): 
\[F^{\tau,t}(y) =\big(\nabla_1f^{\tau,t}_1(y),\ldots, \nabla_Nf^{\tau,t}_N(y)\big). \]
Leveraging again the result of ~\cite[Corollary 2.2.5]{FaccPang1}, we conclude that $x_{\tau}^{*,t}$ is a NE in $\Gamma^{\tau,t}$ iff 
 \begin{align}\label{eq:VI}
     \langle F^{\tau,t}(x_{\tau}^{*,t}), y - x_{\tau}^{*,t}\rangle \le 0 \, \mbox{ for any $y\in\Delta^t$}.
 \end{align}
 
\begin{lem}\label{lem:solutionsQRE}  
The point $x = x_{\tau}^{*,t} \in \Delta^t$ is a NE of the game $\Gamma^{\tau,t}$ if and only if
$\|F^{\tau,t}(x)\|^2 = \sum_{i=1}^N \|\nabla_i f^{\tau,t}_i(x)\|^2 = 0$. Moreover, any such NE $x \in \Delta^t$ that is lifted to $\Delta$ by setting $x_{n_i+1} = 1 - \sum_{j=1}^{n_i} x_i^j$ for each $i \in [N]$ belongs to QRE$(\tau)$ in the game $\Gamma$.   
\end{lem}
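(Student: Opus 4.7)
The plan is to combine two ingredients: (a) the variational inequality characterization~\eqref{eq:VI} of a NE for $\Gamma^{\tau,t}$; and (b) Lemma~\ref{lem:epsApp}, part~1, which places every QRE strictly inside $\Delta$. A third preliminary observation I will use is that $f_i^{\tau,t}(\cdot,x_{-i})$ is strongly concave in $x_i$: by Assumption~\ref{assum:convex1} the original $f_i$ contains no $x_i^TQ_{ii}x_i$ term, hence after the transformation~\eqref{eq:tr} the map $f_i^t(\cdot,x_{-i})$ is affine in $x_i$, and the entropic regularizer written in reduced coordinates is strictly concave on $\mbox{int}\,\Delta_i^t$.

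For the forward direction, let $x\in\Delta^t$ be a NE of $\Gamma^{\tau,t}$. Lifting $x$ back to $\Delta$ via $x_i^{n_i+1}=1-\sum_{j=1}^{n_i}x_i^j$ produces a NE of $\Gamma^{\tau}$, i.e., by Definition~\ref{def:QRE} a point of $\mbox{QRE}(\tau)$; Lemma~\ref{lem:epsApp}, part~1, then guarantees that this lifted point lies in $\mbox{int}\,\Delta$, so every coordinate (including $x_i^{n_i+1}$) is strictly positive, which in reduced coordinates means $x$ lies in the relative interior of $\Delta^t$. Since $x_i$ maximizes the strongly concave function $f_i^{\tau,t}(\cdot,x_{-i})$ over $\Delta_i^t$ at an interior point, the first-order necessary condition yields $\nabla_i f_i^{\tau,t}(x)=0$ for every $i\in[N]$, and summing the norms squared gives $\|F^{\tau,t}(x)\|^2=0$.

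For the reverse direction, assume $\|F^{\tau,t}(x)\|^2=0$, so $\nabla_i f_i^{\tau,t}(x)=0$ for every $i$. A direct calculation of the $j$-th component of this gradient shows
\[\frac{\partial f_i^{\tau,t}}{\partial x_i^j}(x)=\frac{\partial f_i^t}{\partial x_i^j}(x)-\tau\ln(x_i^j)+\tau\ln\bigl(1-\textstyle\sum_{l=1}^{n_i}x_i^l\bigr),\]
so finiteness of the gradient already forces $x_i^j>0$ and $1-\sum_l x_i^l>0$; that is, $x\in\mbox{int}\,\Delta^t$. Then $\langle F^{\tau,t}(x),y-x\rangle=0$ for every $y\in\Delta^t$, so the VI~\eqref{eq:VI} is satisfied and $x$ is a NE of $\Gamma^{\tau,t}$. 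The ``moreover'' claim is immediate: the lifted point is, by construction, a NE of $\Gamma^{\tau}$, which by Definition~\ref{def:QRE} is exactly a point of $\mbox{QRE}(\tau)$ in $\Gamma$.

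The main obstacle is the forward direction. Strong concavity plus compactness guarantees a unique best response for each fixed $x_{-i}$ but not its interiority, so ruling out boundary NE of $\Gamma^{\tau,t}$ genuinely relies on Lemma~\ref{lem:epsApp}, part~1, whose proof in turn exploits the blow-up of the Shannon entropy gradient at the boundary; without that input one could only derive projected KKT-type conditions, not the clean zero-gradient identity $\|F^{\tau,t}(x)\|^2=0$.
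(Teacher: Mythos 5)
Your proof is correct and follows essentially the same route as the paper: interiority of any NE of $\Gamma^{\tau,t}$ via Lemma~\ref{lem:epsApp} (part 1) together with the equivalence of $\Gamma^{\tau}$ and $\Gamma^{\tau,t}$ under the transformation~\eqref{eq:tr}, combined with the first-order/variational-inequality characterization at an interior point. Your added observation that in the reverse direction finiteness of the entropic gradient already forces $x\in\mbox{int}\,\Delta^t$ is a nice detail the paper leaves implicit, but it does not change the argument.
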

\begin{proof} 
 On the one hand, according to Lemma~\ref{lem:epsApp}, any point from the set QRE($\tau$) belongs to $\mathrm{int}\Delta$.  
On the other hand, the set QRE($\tau$) is the set of NE in the game $\Gamma^{\tau}$. Taking into account the equivalence between $\Gamma^{\tau}$ and $\Gamma^{\tau,t}$ under the coordinate transformation~\eqref{eq:tr}, as well as the definition of $\Delta^t$ (see~\eqref{eq:set_t}), we conclude that any NE $x = x_{\tau}^{*,t}$ of the game $\Gamma^{\tau,t}$ belongs to $\mathrm{int}\Delta^t$ and solves the variational inequality~\eqref{eq:VI}. Thus, $x \in \Delta^t$ is a NE of the game $\Gamma^{\tau,t}$ if and only if $F^{\tau,t}(x) = \boldsymbol{0}$, which is equivalent to the condition $\|F^{\tau,t}(x)\|^2 = \sum_{i=1}^N \|\nabla_i f^{\tau,t}_i(x)\|^2 = 0.$ The second statement of the lemma holds again according to the definition of the game $\Gamma^{\tau,t}$ and its equivalence to $\Gamma^{\tau}$, given the transformation~\eqref{eq:tr}.
\end{proof}

The lemma above allows us to formulate the following constrained optimization problem, whose (lifted) optimal solutions coincide with the set QRE($\tau$) in the game $\Gamma$:
\begin{align}\label{eq:problem0}
&\min_{x\in\Delta^t}\sum_{i=1}^N\|\nabla_if^{\tau,t}_i(x)\|^2,
\end{align}
where the functions $f^{\tau,t}_i(x)$ are defined by~\eqref{eq:ut_t}.
In particular, according to Lemma~\ref{lem:epsApp}, the set of optimal solutions of \eqref{eq:problem0} represents approximate NE points for the original game $\Gamma$. We note that the above optimization problem always has a solution, as $\Delta^t$ is a compact set. Moreover, when $\tau = 0$, the problem~\eqref{eq:problem0} is a convex optimization problem. However, in the case $\tau = 0$, we cannot guarantee equivalence between the solution set in~\eqref{eq:problem0} and the NE points in $\Gamma$. If $\tau = 0$, the problem~\eqref{eq:problem0} can only capture NE points in $\Gamma$ from $\mathrm{int}\Delta$, if such points exist. This is the main reason we perturb the utility functions by the Shannon entropy term~\eqref{eq:Shan}, multiplied by a non-zero $\tau$, to be able to calculate an approximate NE in the game $\Gamma$, even in the absence of an interior NE. As a result of such perturbation, the optimization problem~\eqref{eq:problem0} becomes non-convex. In the next section, we present a reformulation of this non-convex optimization problem in terms of multi-objective non-convex optimization one in which the common stationary points of objective functions represent approximate Nash equilibria in $\Gamma$, given an appropriate choice of the regularization parameter $\tau$. This reformulation will allow for the development of distributed heuristic algorithms to obtain an $\epsilon$-NE.

\section{Multi-Objective Problem Formulation}\label{sec:III}
Let us fix a small $\epsilon>0$ and choose $\tau \le \frac{\epsilon}{\max_i\{\ln (n_i+1)\}}$. We start by defining the following constrained set for each player $i\in[N]$: 
\begin{align}\label{eq:extended_Di}
\hD_i = \Bigg\{x_i\in\R^{n_i}:\ \ &x^j_i\ge \frac{e^{-1/\tau^2}}{\max_i n_i+1} \ \forall j\in [n_i], \cr
&\sum_{j=1}^{n_i} x_i^j\le1-\frac{e^{-1/\tau^{1.5}}}{\max_i n_i + 1}\Bigg\}.     
\end{align}
According to Lemma~\ref{lem:epsApp} and because $\delta_f$ is a positive constant, we have $x_{\min}=\frac{e^{\Omega(-1/\tau)}}{\max_i (n_i+1)}$ for any $x\in \mbox{QRE}(\tau)$. Since by Lemma \ref{lem:solutionsQRE} any NE of $\Gamma^{\tau,t}$ belongs to $\mbox{QRE}(\tau)$, using Lemma~\ref{lem:epsApp} (part 3), we conclude that for any $\tau$ satisfying
\begin{align}\label{eq:tau1}
\tau\le\min\left\{\frac{1}{\delta_f}, \frac{1}{\delta^2_f}\right\},
\end{align}
the set of NE in $\Gamma^{\tau,t}$ is contained in 
\begin{align}\label{eq:extended_D}
\hD=\times_{i=1}^N\hD_i.     
\end{align}



We focus on each term $\|\nabla_if^{\tau,t}_i(x)\|^2$ in the objective function of \eqref{eq:problem0}. Let us use the notation $l_i^{\tau}(x) = \|\nabla_if^{\tau,t}_i(x)\|^2$. Using~\eqref{eq:ut_t}, we can write
\begin{align}\label{eq:l_i}
    &l_i^{\tau}(x)= \|\nabla_if^{\tau,t}_i(x)\|^2 \cr
    &= \Big\|\nabla_if^t_i(x) - \tau (\boldsymbol{1}_{n_i}\!+\!\ln x_i)+\tau \Big(1+\ln\big(1\!-\!\sum_{\ell=1}^{n_i}x_i^{\ell}\big)\Big)\boldsymbol{1}_{n_i}\Big\|^2 \cr
    &= \sum_{j=1}^{n_i} \Big(\frac{\partial f^t_i(x)}{\partial x_i^j} - \tau\ln x_i^j + \tau\ln\big(1-\sum_{\ell=1}^{n_i}x_i^{\ell}\big) \Big)^2\cr
    &= \sum_{j=1}^{n_i}l_{i,j}^{\tau}(x),
\end{align}
where $\ln x = (\ln x_i^1,\ldots,\ln x_i^{n_i} )^T$, and 
\begin{align}\label{eq:l_ij}
l_{i,j}^{\tau}(x)=\Big(\hat r_i^j + \sum_{k\ne i}\langle \hat q_{ik}^j, &x_k\rangle - \tau\ln x_i^j+\tau\ln\big(1-\sum_{\ell=1}^{n_i}x_i^{\ell}\big) \Big)^2,
\end{align}
where $\hat q_{ik}^j$ is the $j$th row in the matrix $\hat Q_{ik}$ in the definition of the function $f_i^{t}$ (see~\eqref{eq:ut_t1}).
 Let us consider the following multi-objective optimization problem: 
  \begin{align}\label{eq:MOprob}
  \min_{x\in\hD} l_{i,j}^{\tau}(x), \quad i\in[N], j\in[n_i].
 \end{align}
 Then, using Lemma~\ref{lem:solutionsQRE}, the following result holds. 
 \begin{lem}\label{lem:MO}
 There exists a solution to the problem~\eqref{eq:MOprob} with the optimal values $l_{i,j}^{\tau, *} = 0$ for each objective function. Moreover, the solution set for the problem~\eqref{eq:MOprob} is the set of NE in the game $\Gamma^{\tau,t}$.
 \end{lem}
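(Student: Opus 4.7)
The plan is to combine Lemma~\ref{lem:solutionsQRE} with the elementary fact that a sum of nonnegative terms vanishes iff every term vanishes. Since each $l_{i,j}^{\tau}(x)$ is defined in~\eqref{eq:l_ij} as the square of a real-valued expression, the decomposition~\eqref{eq:l_i} gives
\begin{align*}
\|F^{\tau,t}(x)\|^2 \;=\; \sum_{i=1}^N\sum_{j=1}^{n_i} l_{i,j}^{\tau}(x),
\end{align*}
so $\|F^{\tau,t}(x)\|^2 = 0$ holds iff all $l_{i,j}^{\tau}(x) = 0$ simultaneously. This matches the NE characterization in Lemma~\ref{lem:solutionsQRE}, and the claim will follow once the solution set of~\eqref{eq:MOprob} and the set of NE of $\Gamma^{\tau,t}$ are shown to coincide inside $\hD$.

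For the existence part I would rely on the fact already noted after Definition~\ref{def:QRE} that $\Gamma^{\tau}$ admits a NE (the entropy term renders each $f_i^{\tau}(\cdot,x_{-i})$ strictly concave); the linear change of variables~\eqref{eq:tr} transports this point to a NE $x_{\tau}^{*,t}\in\Delta^t$ of $\Gamma^{\tau,t}$. By Lemma~\ref{lem:solutionsQRE}, the lift of $x_{\tau}^{*,t}$ lies in QRE($\tau$), and Lemma~\ref{lem:epsApp}(3) then supplies the coordinate lower bound $x_{\min} = \exp(-\delta_f/\tau)/\max_i(n_i+1)$. Under the standing assumption~\eqref{eq:tau1} one has $\delta_f/\tau \le 1/\tau^2$ and $\delta_f/\tau \le 1/\tau^{1.5}$, so $x_{\min}$ dominates both $e^{-1/\tau^{2}}/(\max_i n_i + 1)$ and $e^{-1/\tau^{1.5}}/(\max_i n_i + 1)$; this is precisely what is required to place $x_{\tau}^{*,t}$ inside $\hD$ as defined in~\eqref{eq:extended_Di}. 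Lemma~\ref{lem:solutionsQRE} then gives $\sum_{i,j} l_{i,j}^{\tau}(x_{\tau}^{*,t}) = 0$, and by nonnegativity every $l_{i,j}^{\tau}$ attains its infimum $0$ at this common point, establishing the first statement with $l_{i,j}^{\tau,*} = 0$.

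The equivalence of the two sets is read off from the same argument in both directions. Any $x \in \hD$ with $l_{i,j}^{\tau}(x) = 0$ for every $(i,j)$ satisfies $\|F^{\tau,t}(x)\|^2 = 0$ and is hence a NE of $\Gamma^{\tau,t}$ by Lemma~\ref{lem:solutionsQRE}; conversely every NE of $\Gamma^{\tau,t}$ belongs to QRE($\tau$), hence to $\hD$ by the containment argument just given, and forces each $l_{i,j}^{\tau}$ to vanish. The only nontrivial step — and what I expect to be the main obstacle — is the technical bookkeeping that checks the asymmetric thresholds $e^{-1/\tau^{2}}$ and $e^{-1/\tau^{1.5}}$ in~\eqref{eq:extended_Di} are both dominated by the uniform QRE lower bound $e^{-\delta_f/\tau}/\max_i(n_i+1)$ under the calibration~\eqref{eq:tau1}. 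Once that containment is verified, the lemma reduces to the tautology that a sum of squares vanishes iff each summand does.
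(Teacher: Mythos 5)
Your argument is correct and follows exactly the route the paper intends: the paper leaves Lemma~\ref{lem:MO} unproved beyond the remark ``using Lemma~\ref{lem:solutionsQRE}, the following result holds,'' and your proof simply makes explicit the decomposition $\|F^{\tau,t}(x)\|^2=\sum_{i,j}l_{i,j}^{\tau}(x)$, the sum-of-squares-vanishes-iff-each-summand-does observation, and the containment of the NE set of $\Gamma^{\tau,t}$ in $\hD$ under~\eqref{eq:tau1}. Your verification that $\tau\le\min\{1/\delta_f,\,1/\delta_f^2\}$ makes $e^{-\delta_f/\tau}$ dominate both thresholds $e^{-1/\tau^2}$ and $e^{-1/\tau^{1.5}}$ is exactly the bookkeeping the paper performs just before defining~\eqref{eq:extended_D}, so nothing is missing.
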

Next, we will focus on the properties of the objective functions $l_{i,j}^{\tau}(x)$ in the optimization problem~\eqref{eq:MOprob}.
\begin{lem}\label{lem:lem1}
For each $i\in[N], j\in[n_i]$, we have $\nabla l_{i,j}^{\tau}(x) = 0$ for some $x\in\hD$ if and only if $x$ minimizes $l_{i,j}^{\tau}(x)$, i.e.  $l_{i,j}^{\tau}(x) =l_{i,j}^{\tau, *}= 0$.
\end{lem}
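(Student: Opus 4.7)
The plan is to exploit the squared structure of $l_{i,j}^{\tau}$. Writing
\[
g_{i,j}^{\tau}(x) = \hat r_i^j + \sum_{k\ne i}\langle \hat q_{ik}^j, x_k\rangle - \tau\ln x_i^j+\tau\ln\Big(1-\sum_{\ell=1}^{n_i}x_i^{\ell}\Big),
\]
we have $l_{i,j}^{\tau}(x) = \big(g_{i,j}^{\tau}(x)\big)^2$, hence $\nabla l_{i,j}^{\tau}(x) = 2\, g_{i,j}^{\tau}(x)\, \nabla g_{i,j}^{\tau}(x)$. The ``if'' direction is then immediate: if $l_{i,j}^{\tau}(x)=0$, then $g_{i,j}^{\tau}(x)=0$, so $\nabla l_{i,j}^{\tau}(x)=0$ regardless of $\nabla g_{i,j}^{\tau}(x)$.

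For the nontrivial ``only if'' direction, the strategy is to show that $\nabla g_{i,j}^{\tau}(x)$ cannot vanish anywhere on $\hat\Delta$. This forces the scalar factor $g_{i,j}^{\tau}(x)$ to be zero whenever $\nabla l_{i,j}^{\tau}(x)=0$, which is precisely $l_{i,j}^{\tau}(x)=0$. To establish the non-vanishing of $\nabla g_{i,j}^{\tau}$, I would single out the component corresponding to $x_i^j$ and compute
\[
\frac{\partial g_{i,j}^{\tau}}{\partial x_i^j}(x) = -\frac{\tau}{x_i^j} - \frac{\tau}{1-\sum_{\ell=1}^{n_i}x_i^{\ell}}.
\]
By the definition~\eqref{eq:extended_Di} of $\hat\Delta_i$, every $x\in\hat\Delta$ satisfies $x_i^j>0$ and $1-\sum_{\ell}x_i^{\ell}>0$ with both quantities bounded away from zero; combined with $\tau>0$, this partial derivative is therefore strictly negative throughout $\hat\Delta$. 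Consequently $\nabla g_{i,j}^{\tau}(x)\neq 0$ on $\hat\Delta$.

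Putting these pieces together: if $\nabla l_{i,j}^{\tau}(x)=2 g_{i,j}^{\tau}(x)\nabla g_{i,j}^{\tau}(x)=0$ at some $x\in\hat\Delta$, then since the vector $\nabla g_{i,j}^{\tau}(x)$ has at least one nonzero component (its $x_i^j$-entry), the scalar $g_{i,j}^{\tau}(x)$ must vanish, yielding $l_{i,j}^{\tau}(x)=\big(g_{i,j}^{\tau}(x)\big)^2=0=l_{i,j}^{\tau,*}$, with optimality following from Lemma~\ref{lem:MO}.

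I do not anticipate a real obstacle here; the whole argument rests on the observation that the entropy-regularization term provides a coercive logarithmic barrier in the $x_i^j$-direction, guaranteeing a strictly negative partial derivative of $g_{i,j}^{\tau}$ on the ``safely interior'' set $\hat\Delta$. The only thing to double-check is that the chosen bounds in~\eqref{eq:extended_Di} keep $x_i^j$ and $1-\sum_{\ell}x_i^{\ell}$ strictly positive (which they do by construction, since $e^{-1/\tau^2}/(\max_i n_i+1)>0$ and $e^{-1/\tau^{1.5}}/(\max_i n_i+1)>0$), so the denominators in the displayed partial derivative are well-defined and the sign argument goes through.
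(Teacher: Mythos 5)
Your argument is correct and is essentially the paper's own proof: the paper likewise factors $\nabla l_{i,j}^{\tau}(x)$ as $2$ times the scalar $g_{i,j}^{\tau}(x)$ times the vector $\boldsymbol{q}_{i,\tau}^{j}(x_i)=\nabla g_{i,j}^{\tau}(x)$, and concludes that $\boldsymbol{q}_{i,\tau}^{j}(x_i)\neq 0$ on $\hD$ because its $x_i^j$-component $-\tau/x_i^j-\tau/(1-\sum_{\ell}x_i^{\ell})$ is strictly negative there. Your isolation of that single strictly negative component is exactly the observation the paper relies on, so no further comparison is needed.
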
 
\begin{proof}
Using the definition of $l_{i,j}^{\tau}(x)$ from \eqref{eq:l_ij}, we have 
 \begin{align}\label{eq:nabl0}
    &\nabla l_{i,j}^{\tau}(x)\cr
    &=\nabla \Big(\hat r_i^j + \sum_{k\ne i}\langle \hat q_{ik}^j, x_k\rangle - \tau\ln x_i^j + \tau\ln\big(1-\sum_{\ell=1}^{n_i}x_i^{\ell}\big) \Big)^2\cr
    &= 2\Big(\hat r_i^j + \sum_{k\ne i}\langle \hat q_{ik}^j, x_k\rangle - \tau\ln x_i^j + \tau\ln\big(1-\sum_{\ell=1}^{n_i}x_i^{\ell}\big) \Big)\cr
&\qquad\times\boldsymbol{q}_{i,\tau}^j(x_i),
\end{align}
where 
\begin{align}\label{eq:q}
\!\!\!\!\boldsymbol{q}_{i,\tau}^j(x_i)=& \Big(-\frac{\tau}{x_i^j}-
    \frac{\tau}{1-\sum_{\ell=1}^{n_i}x_i^{\ell}},\cr
    & -\frac{\tau}{1-\sum_{\ell=1}^{n_i}x_i^{\ell}},\ldots,-\frac{\tau}{1-\sum_{\ell=1}^{n_i}x_i^{\ell}},\cr
    &\, \hat q_{i1}^j, \ldots,\hat q_{ii-1}^j, \hat q_{ii+1}^j,\ldots,  \hat q_{iN}^j,  0,\ldots, 0\Big)^T\!\!.
\end{align} 
We note that in the definition of $\boldsymbol{q}_{i,\tau}^j(x_i)$ above, we have rearranged the coordinates such that all potentially nonzero elements appear at the beginning of the vector $\boldsymbol{q}_{i,\tau}^j(x_i)$, i.e., the first coordinate corresponds to $\frac{\partial l_{i,j}^{\tau}(x)}{\partial x_i^{j}}$ followed by $\frac{\partial l_{i,j}^{\tau}(x)}{\partial x_i^{-j}}$. 
As $x_i^{\ell}\in [\frac{e^{-1/\tau^2}}{\max_i n_i},1)\ \forall \ell$, we conclude that $\boldsymbol{q}_{i,\tau}^j(x_i)\ne 0$ for any $x\in\hD$. Thus, $\nabla l_{i,j}^{\tau}(x)=0$ iff
\begin{align*}
      \hat r_i^j + \sum_{k\ne i}\langle \hat q_{ik}^j, x_k\rangle - \tau\ln x_i^j + \tau\ln\Big(1-\sum_{\ell=1}^{n_i}x_i^{\ell}\Big) = 0, 
\end{align*}   
which in view of \eqref{eq:l_ij} implies $l_{i,j}^{\tau}(x) =l_{i,j}^{\tau, *}= 0$.
\end{proof}

The above lemma implies that if an optimization procedure finds a point \( x \) such that \( l_{i,j}^{\tau}(x) = 0 \) for all \( i \in [N] \), \( j \in [n_i] \), then \( x \) solves the problem~\eqref{eq:MOprob}. However, since the function \( l_{i,j}^{\tau}(x)\) is non-convex, a gradient-based optimization approach can only guarantee convergence to a stationary point of \( l_{i,j}^{\tau}(x) \). Therefore, as a next step, we proceed to characterize stationary points of \( l_{i,j}^{\tau}(x) \) and establish a connection between a common stationary point of all the functions \( l_{i,j}^{\tau}(x), i \in [N], j \in [n_i]\) and the solution set to the multi-objective optimization problem~\eqref{eq:MOprob}, which, by Lemma~\ref{lem:MO}, equals the set of NE points in the game \( \Gamma^{\tau,t} \).

\begin{definition}
We denote the set of stationary points of the function $l_{i,j}^{\tau}(x)$ on the set $\hD$ by
\begin{align}\nonumber
    \EuScript S^{\tau}_{i,j} = \big\{x\in\hD: \, \langle\nabla l_{i,j}^{\tau}(x), y-x\rangle\ge 0 \quad \mbox{for any }y\in\hD \big\},
\end{align}
and denote the intersection of these stationary point sets by:
\begin{align}\label{eq:statset}
\EuScript S^{\tau}=\cap_{i=1}^N\cap_{j=1}^{n_i} \EuScript S^{\tau}_{i,j}.    
\end{align}   
\end{definition}

Now, we can state the following result. 
\begin{lem}\label{lem:stationary}
For sufficiently small \( \tau > 0 \), the set \( \mathcal{S}^{\tau} \) is nonempty and equals the set of NE points in the game \( \Gamma^{\tau,t} \). 
\end{lem}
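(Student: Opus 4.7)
The plan is to prove the equality $\mathcal{S}^\tau = \mathrm{NE}(\Gamma^{\tau,t})$ by two inclusions. Combined with the existence of a NE in $\Gamma^{\tau,t}$ (the entropy-regularized payoffs are strongly concave in each player's own variable, so Rosen-type existence applies on the compact convex $\Delta^t$), the first inclusion already delivers nonemptiness of $\mathcal{S}^\tau$.

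For the easy inclusion $\mathrm{NE}(\Gamma^{\tau,t}) \subseteq \mathcal{S}^\tau$, I would take any NE $x$. The argument right above~\eqref{eq:extended_D} (Lemma~\ref{lem:epsApp}(3) together with~\eqref{eq:tau1}) places $x \in \hD$; Lemma~\ref{lem:solutionsQRE} gives $l_{i,j}^{\tau}(x) = 0$ for all $(i,j)$, and Lemma~\ref{lem:lem1} upgrades this to $\nabla l_{i,j}^{\tau}(x) = 0$, so the variational inequality defining each $\mathcal{S}^{\tau}_{i,j}$ holds trivially.

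The reverse inclusion is the substantive part. The goal is to show that every $x \in \mathcal{S}^\tau$ satisfies
\[
L_{i,j}(x) := \hat r_i^j + \sum_{k\ne i}\langle \hat q_{ik}^j, x_k\rangle - \tau\ln x_i^j + \tau\ln\bigl(1-\sum_{\ell}x_i^{\ell}\bigr) = 0
\]
for every $(i,j)$, since then \eqref{eq:l_ij} and Lemma~\ref{lem:solutionsQRE} identify $x$ as a NE of $\Gamma^{\tau,t}$. I would proceed by contradiction; assume $L_{i,j}(x) > 0$ for some $(i,j)$. Restricting the stationarity of $l_{i,j}^{\tau}$ to perturbations of $x_i$ alone, \eqref{eq:nabl0}--\eqref{eq:q} give that the $x_i$-block of $\boldsymbol{q}_{i,\tau}^j(x_i)$ has $j$-th component $-\tau/x_i^j - \tau/(1-\sum_\ell x_i^\ell)$ and $\ell$-th component $-\tau/(1-\sum_\ell x_i^\ell)$ for $\ell\ne j$, all strictly negative and with the $j$-th strictly more negative than the others. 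The corresponding linear program on the polytope $\hD_i$ has a unique minimizer at the vertex $x_i^j = 1 - \beta - (n_i-1)\alpha$, $x_i^\ell = \alpha$ for $\ell\ne j$, where $\alpha := e^{-1/\tau^2}/(\max_i n_i+1)$ and $\beta := e^{-1/\tau^{1.5}}/(\max_i n_i+1)$ are the defining bounds in~\eqref{eq:extended_Di}. At this vertex $1-\sum_\ell x_i^\ell = \beta$, so $\tau\ln(1-\sum_\ell x_i^\ell) = -\tau^{-1/2} - \tau\ln(\max_i n_i+1) \to -\infty$ while $-\tau\ln x_i^j\to 0$ and the bounded game-dependent terms stay $O(1)$; hence $L_{i,j}(x)\to -\infty$ as $\tau\to 0^+$, contradicting $L_{i,j}(x) > 0$. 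Symmetrically, $L_{i,j}(x) < 0$ forces $x_i$ to the all-$\alpha$ vertex, at which $-\tau\ln x_i^j = \tau^{-1} + O(\tau)$ dominates and makes $L_{i,j}$ large positive, again a contradiction. The main obstacle is the LP step that pins $x_i$ to a specific vertex of $\hD_i$; the subsequent asymptotic comparison is direct because the scalings $e^{-1/\tau^2}$ and $e^{-1/\tau^{1.5}}$ in~\eqref{eq:extended_Di} are chosen precisely to make $\tau\ln\beta\to -\infty$ and $-\tau\ln\alpha\to +\infty$ dominate the bounded game terms in opposite directions.
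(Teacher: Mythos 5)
Your proof is correct, and the forward inclusion together with the existence argument for nonemptiness matches the paper's. For the substantive reverse inclusion you rely on the same core mechanism as the paper---the factorization $\nabla l_{i,j}^{\tau}(x) = 2L_{i,j}(x)\,\boldsymbol{q}_{i,\tau}^{j}(x_i)$ and the fact that $-\tau\ln\alpha = \tau^{-1}+O(\tau)$ and $\tau\ln\beta = -\tau^{-1/2}+O(\tau)$ dominate the bounded game-dependent terms---but you assemble the contradiction in the opposite order. The paper deduces from $\nabla l_{i',j'}^{\tau}(x)\neq 0$ that $x\in\partial\hD$, splits into cases according to which constraint of $\hD_i$ is active, determines the sign of the scalar factor in each case, and then exhibits an explicit feasible direction $y-x$ along which the variational inequality fails. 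You instead start from the sign of $L_{i,j}(x)$, note that stationarity then forces $x_i$ to solve a linear program over $\hD_i$ whose cost vector has all strictly negative entries with the $j$-th strictly smallest, pin $x_i$ to the unique optimal vertex, and read off the opposite sign of $L_{i,j}$ there. Your route is arguably tighter: it localizes $x_i$ exactly rather than merely to the boundary, it avoids having to pass from the index with $l_{i',j'}^{\tau}\neq 0$ to a possibly different index whose constraint is active, and it makes the roles of the two scalings $e^{-1/\tau^2}$ and $e^{-1/\tau^{1.5}}$ completely transparent; the paper's version, in exchange, isolates the reusable fact that $\mathcal{S}^{\tau}$ cannot meet $\partial\hD$. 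The only detail left implicit in yours is the feasibility of the distinguished vertex, i.e., $n_i\alpha+\beta\le 1$, which holds for small $\tau$.
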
 
\begin{proof}
Let \( x \) be a NE of the game \( \Gamma^{\tau,t} \). Then, according to Lemma~\ref{lem:MO}, \( l_{i,j}^{\tau}(x) = 0 \) for all \( i \in [N] \), \( j \in [n_i] \). Moreover, Lemma~\ref{lem:lem1} implies that \( \nabla l_{i,j}^{\tau}(x) = 0 \) for all \( i \in [N] \), \( j \in [n_i] \). Thus, \( x \in \mathcal{S}^{\tau} \), and hence, NE points of \( \Gamma^{\tau,t} \) belong to \( \mathcal{S}^{\tau} \).

Now, by contradiction, suppose there exists \( x \in \mathcal{S}^{\tau} \) such that \( x \) is not a NE of the game \( \Gamma^{\tau,t} \). From Lemma~\ref{lem:MO}, there exist \( i' \in [N] \) and \( j' \in [n_{i'}] \) such that \( l_{i',j'}^{\tau}(x) \neq 0 \). Thus, according to Lemma~\ref{lem:lem1}, \( \nabla l_{i',j'}^{\tau}(x) \neq 0 \). However, since \( x \in \mathcal{S}^{\tau}_{i',j'} \), we conclude that \( x \in \partial \hD \). This implies the following two possibilities for the coordinates of \( x \):
\begin{align}\label{eq:1a}
\hspace{-0.2cm}(a) \qquad x_i^j = \frac{e^{-1/\tau^2}}{\max_i n_i+1}\, \mbox{ for some $i\in[N]$, $j\in[n_i]$,}
\end{align}
\begin{align}\label{eq:1b}
\!\!\!\!\!(b) \qquad 1-\sum_{\ell=1}^{n_i} x_i^{\ell} = \frac{e^{-1/\tau^{1.5}}}{\max_i n_i + 1}\, \mbox{ for some $i\in[N]$}.
\end{align}

First, we consider the case (a). Having the relation~\eqref{eq:1a} at place, we can calculate $\nabla l_{i,j}^{\tau}(x)$ using \eqref{eq:nabl0} to obtain: 
\begin{align}\label{eq:nabl}
\nabla l_{i,j}^{\tau}(x)&=2 \Bigg(\hat r_i^j + \sum_{k\ne i}\langle \hat q_{ik}^j, x_k\rangle +\tau \ln (\max_i n_i+1)\cr
&\qquad+ \frac{1}{\tau} +\tau\ln\Big(1-\sum_{\ell=1}^{n_i}x_i^{\ell}\Big)\Bigg)\boldsymbol{q}_{i,\tau}^{j}(x_i),
\end{align}
where $\boldsymbol{q}_{i,\tau}^{j}(x_i)$ is given by \eqref{eq:q}.   


Next, we demonstrate that for a sufficiently small $\tau>0$ the multiplier $\hat r_i^j + \sum_{k\ne i}\langle \hat q_{ik}^j, x_k\rangle  + \frac{1}{\tau} + \tau \ln (\max_i n_i+1)+ \tau\ln\left(1-\sum_{\ell=1}^{n_i}x_i^{\ell}\right)$ in the definition of $\nabla l_{i,j}^{\tau}(x)$ in~\eqref{eq:nabl} is positive for any $x\in\hat\Delta$.
According to the definition of the set $\tilde{\Delta}$ (see~\eqref{eq:extended_Di} and~\eqref{eq:extended_D}), we have
\begin{align}\label{eq:2}
1-\sum_{\ell=1}^{n_i}x_i^{\ell}\ge \frac{e^{-1/\tau^{1.5}}}{\max_i n_i+1}.
\end{align}
Thus, 
\begin{align}\label{eq:3}
 \tau\ln\Big(1-\sum_{\ell=1}^{n_i}x_i^{\ell}\Big)\ge -\frac{1}{\tau^{0.5}}-\tau \ln (\max_i n_i+1).
\end{align}
There exists sufficiently small $\tau$  such that for any $x\in\hD$
\begin{align}\label{eq:4}
\hat r_i^j + \sum_{k\ne i}\langle \hat q_{ik}^j, x_k\rangle + \frac{1}{\tau} -\frac{1}{{\tau^{0.5}}}>0.
\end{align}
Using relations~\eqref{eq:nabl},~\eqref{eq:3}, and~\eqref{eq:4}, we conclude that the vectors $\nabla l_{i,j}^{\tau}(x)$ and $\boldsymbol{q}_{i,\tau}^{j}(x_i)$ are the same up to a positive multiplier. Now, consider a specific $y\in\hD$ that is defined by $y_i^j>x_i^j=\frac{e^{-1/\tau^2}}{\max_i n_i+1}$, $\sum_{\ell=1}^{n_i} y_i^{\ell}  \ge \sum_{\ell=1}^{n_i} x_i^{\ell}$, $y_{m}^{\ell} =x_{m}^{\ell}$ for all $m\ne i$ and $\ell\in[n_{m}]$, and note that such a feasible $y\in\hD$ always exists. Then,   
\begin{align}\label{eq:<q,y-x>}
\hspace{-0.5cm}\langle \boldsymbol{q}_{i,\tau}^{j}(x_i), y-x\rangle&=-\frac{\tau}{x_i^j}(y_i^j-x_i^j)\cr
&-\frac{\tau}{1-\sum_{\ell=1}^{n_i}x_i^{\ell}} \Big(\sum_{\ell=1}^{n_i}y_i^{\ell}-\sum_{\ell=1}^{n_i}x_i^{\ell}\Big),
\end{align}
which is negative due to the choice of $y$.
This shows that $\langle\nabla l_{i,j}^{\tau}(x), y-x\rangle<0$ for some $y\in\hD$. Thus, $x\notin \mathcal{S}_{i,j}^{\tau}$, which contradicts the assumption $x\in \EuScript S^{\tau}$. 

Next, we will consider case (b). As~\eqref{eq:1b} holds, there exists $j\in[n_i]$ such that 
\begin{align}\label{eq:1b1}
    x_i^j \ge \frac{1}{n_i} - \frac{e^{-1/\tau^{1.5}}}{n_i(\max_i n_i + 1)}\ge \frac{1}{n_i+1}.
\end{align}
By calculating $\nabla l_{i,j}^{\tau}(x)$ using \eqref{eq:nabl0} under case (b), we have
\begin{align}\label{eq:nabl1b}
    \nabla l_{i,j}^{\tau}(x)&= 2\Bigg(\hat r_i^j + \sum_{k\ne i}\langle \hat q_{ik}^j, x_k\rangle - \tau\ln x_i^j -\frac{1}{\tau^{0.5}} \cr
    &\qquad\qquad- \tau\ln(\max_in_i+1) \Bigg)\boldsymbol{q}_{i,\tau}^{j}(x_i),
\end{align}
where $\boldsymbol{q}_{i,\tau}^j(x_i)$ is given by~\eqref{eq:q}. We note that the multiplier in the expression \eqref{eq:nabl1b} is negative for any $x \in \tilde{\Delta}$, given an appropriate small choice of $\tau$. The reason is that using \eqref{eq:1b1},
\begin{align}\label{eq:tau2}
&\hat{r}_i^j + \sum_{k \neq i} \langle \hat{q}_{ik}^j, x_k \rangle - \tau \ln x_i^j - \frac{1}{\tau^{0.5}} - \tau \ln (\max_i n_i + 1)\cr
&\leq \hat{r}_i^j + \sum_{k \neq i} \langle \hat{q}_{ik}^j, x_k \rangle - \frac{1}{\tau^{0.5}}<0.
\end{align}
where the second inequality holds for sufficiently small $\tau$. Therefore, $\nabla l_{i,j}^{\tau}(x)$ and $\boldsymbol{q}_{i,\tau}^j(x_i)$ are the same vectors up to a negative multiplier.

Finally, using a similar argument as in case (a), let us consider a specific vector $y \in \hat{\Delta}$ defined by $y_i^j < x_i^j$, $\sum_{\ell=1}^{n_i} y_i^\ell \le \sum_{\ell=1}^{n_i} x_i^\ell$, $y_m^\ell = x_m^\ell$ for all $m \neq i$ and $\ell \in [n_m]$. Then, using \eqref{eq:<q,y-x>}, the choice of $y$, and the fact that $-\frac{\tau}{x_i^j} < 0$ and $-\frac{\tau}{1 - \sum_{\ell=1}^{n_i} x_i^\ell} < 0$, we obtain $\langle \boldsymbol{q}_{i,\tau}^j(x_i), y - x \rangle > 0$, which in turn implies $\langle \nabla l_{i,j}^{\tau}(x), y - x \rangle < 0$. Therefore, $x \notin \mathcal{S}_{i,j}^{\tau}$, which contradicts the assumption that $x \in \EuScript{S}^{\tau}$.

The obtained contradictions in both cases (a) and (b) imply that $\EuScript{S}^{\tau}$ is a subset of the NE points in the game $\Gamma^{\tau,t}$, which completes the proof.
\end{proof}    

Lemmas~\ref{lem:epsApp},~\ref{lem:solutionsQRE}, and~\ref{lem:stationary} imply the following proposition. 
\begin{prop}\label{prop:1}
 Given a fixed $\epsilon > 0$, let $\tau \le \frac{\epsilon}{\max_i\{\ln (n_i+1)\}}$. There exists a sufficiently small $\epsilon$ such that any point $x \in \EuScript{S}^{\tau}$ is an $\epsilon$-NE in the game $\Gamma$, where $\EuScript{S}^{\tau}$ represents the set of common stationary points of the objective functions $l_{i,j}^{\tau}$ over $\hat\Delta$ in the multi-objective optimization problem~\eqref{eq:MOprob}. 
\end{prop}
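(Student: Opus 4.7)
The plan is to chain together the three referenced lemmas in the natural order suggested by the statement. First I would fix $\epsilon>0$ with the prescribed bound $\tau \le \frac{\epsilon}{\max_i\{\ln(n_i+1)\}}$ and observe that, by choosing $\epsilon$ sufficiently small, $\tau$ can itself be made as small as needed; in particular small enough that both the condition used in Lemma~\ref{lem:stationary} and the condition \eqref{eq:tau1} (which was used to guarantee that all NE of $\Gamma^{\tau,t}$ lie inside the enlarged feasible set $\hD$) are simultaneously satisfied. This is the role of the phrase ``there exists a sufficiently small $\epsilon$'' in the statement.

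Next I would invoke Lemma~\ref{lem:stationary}: under the smallness condition on $\tau$, every common stationary point $x \in \EuScript S^{\tau}$ of the objectives $l_{i,j}^{\tau}$ over $\hD$ is in fact a Nash equilibrium of the perturbed, transformed game $\Gamma^{\tau,t}$. Then I would apply Lemma~\ref{lem:solutionsQRE}: lifting $x$ to $\Delta$ via the coordinate reconstruction $x_{n_i+1} = 1 - \sum_{j=1}^{n_i} x_i^j$ yields a point in $\mbox{QRE}(\tau)$ for the original game $\Gamma$. Finally, by Lemma~\ref{lem:epsApp} (part 2), the specific choice $\tau \le \frac{\epsilon}{\max_i\{\ln(n_i+1)\}}$ guarantees that every QRE point at parameter $\tau$ is an $\epsilon$-NE of $\Gamma$. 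Composing these three implications gives the desired conclusion.

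The only non-mechanical step in this composition is to check that the smallness requirement inherited from Lemma~\ref{lem:stationary} (and from \eqref{eq:tau1}) is compatible with the linear bound $\tau \le \frac{\epsilon}{\max_i\{\ln(n_i+1)\}}$. Since $\delta_f$ depends only on the game data (and in particular not on $\tau$), the bound~\eqref{eq:tau1} fixes an absolute threshold $\tau_0>0$, and it suffices to take $\epsilon$ small enough that $\frac{\epsilon}{\max_i\{\ln(n_i+1)\}} \le \tau_0$ and also small enough to meet the additional threshold arising from the inequalities~\eqref{eq:4} and~\eqref{eq:tau2} in the proof of Lemma~\ref{lem:stationary}. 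I do not expect any genuine obstacle here: the argument is a one-line deduction from the preceding lemmas, and the only thing to verify is the mutual compatibility of the various smallness conditions on $\tau$, which all translate transparently into an upper bound on $\epsilon$.
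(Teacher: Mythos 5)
Your proposal is correct and follows essentially the same route as the paper, which simply asserts that Lemmas~\ref{lem:epsApp}, \ref{lem:solutionsQRE}, and~\ref{lem:stationary} chain together to give the result. Your compatibility check on the various smallness thresholds for $\tau$ is exactly what the paper records explicitly in Remark~\ref{rem:tau} via the bound $\tau < \tau_{\text{max}}$.
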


\begin{remark}\label{rem:tau}
One can obtain an explicity valid range for $\epsilon$ in Proposition~\ref{prop:1}. Indeed, by combining the relations~\eqref{eq:tau1},~\eqref{eq:4}, and~\eqref{eq:tau2} for a sufficiently small choice of $\tau$, we conclude that the statement of Proposition~\ref{prop:1} holds if
\begin{align}\nonumber
    \tau < \tau_{\text{max}} := \min\left\{\frac{1}{\delta_f}, \frac{1}{\delta_f^2}, \frac{1}{4}, \frac{1}{R^2}\right\},
\end{align}
where $R = \max_{i,j,x_k \in \hD_k}\big\{\hat{r}_i^j + \sum_{k \neq i} \langle \hat{q}_{ik}^j, x_k \rangle\big\}$, and $\hat{q}_{ik}^j$ and $\hat{r}_i^j$ are defined in \eqref{eq:ut_t1}. Thus, $\epsilon$ should be chosen as follows:
\[
\epsilon < \tau_{\text{max}} \max_i \left\{\ln(n_i + 1)\right\}.
\]
\end{remark}

\section{Distributed Solution Approaches}\label{sec:IV}
Equipped with the results from the previous section, we now present two heuristic distributed algorithms in this section to obtain an $\epsilon$-NE solution. Of course, in view of Remark \ref{rem:compl}, we do not expect our algorithms to always find an $\epsilon$-NE efficiently. However, as we will discuss, the proposed algorithms exhibit some desirable convergence properties. 

To that end, we assume throughout this section the existence of a weighted undirected communication graph $G_{w}([N], \EuScript{A})$ connecting the players. The set of nodes $[N]$ corresponds to $N$ players, and the arc $(i,j) \in \EuScript{A}$ exists if there is a link between players $i$ and $j$. We make the following standard assumption regarding the communication graph $G_{w}$ \cite{Bianchi, NedOzPar}.

\begin{assum}\label{assum:comm}
The underlying undirected comm  unication graph $G_{w}([N], \EuScript{A})$ is connected. The associated non-negative, symmetric mixing matrix $W = [w_{ij}] \in \mathbb{R}^{N \times N}$ defines the weights on the undirected arcs such that $w_{ij} > 0$ if and only if $\{i,j\} \in \EuScript{A}$, and $\sum_{j=1}^{N} w_{ij} = 1 - \beta$ for all $i \in [N]$ and some $\beta \in (0,1)$.
\end{assum}

\subsection{A Distributed Projected Gradient Descent Algorithm}

Our first proposed algorithm uses the result of Proposition~\ref{prop:1}, by focusing on solving the optimization problem~\eqref{eq:MOprob}. First, following the work~\cite{Bianchi}, we aim to adapt a distributed algorithm to calculate stationary points of the function 
\begin{align}\label{eq:l}
  l^{\tau}(x)= \frac{1}{n}\sum_{i\in[N], j\in[n_{i}]}l_{i,j}^{\tau}(x).
\end{align}

During the optimization procedure, we let each player $i \in [N]$ process $n_i$ estimates of the joint action $x \in \hat{\Delta}$, which are represented by the vectors $\tx_{i}^j(t) \in \hat{\Delta}, j \in [n_i]$. Specifically, let each player $i$ communicate its estimate vectors $\tx_{i}^j(t)$, $j \in [n_i]$, via the communication step: 
\begin{align}\nonumber
       x_{i}^j(t)=\sum_{i'\in[N], j'\in[n_{i'}]} w^{j,j'}_{i,i'}\tx_{i'}^{j'}(t)\ \ \ \forall j\in [n_i],
\end{align}
where the weight parameters $w^{j,j'}_{i,i'}$ are chosen as follows: 
\begin{align}\label{eq:weights}
 w^{j,j'}_{i,i'} =  
 \begin{cases}
   w_{ij}, &\mbox{ if $i\ne i'$ and $j=j'=1$},\\ 
   \frac{\beta}{n_i}, \,\,&\mbox{ if $i = i'$}, \\
    0, \,\,\,\,\,\,&\mbox{ otherwise}.
\end{cases} 
\end{align}
Then, each player $i$ updates its estimates $\tx_{i}^j(t)$, $j\in[n_i]$ at the next time step using projected gradient descent method with an appropriate choice of step-size (see \eqref{eq:da}). The overall algorithm is summarized in Algorithm \ref{alg:GD}.

\begin{algorithm}[H]\caption{Projected Gradient Descent for Player $i$}\label{alg:GD}
{\bf Input:} Each player $i$ process $n_i$ estimates of the joint action with initial values $\tilde{x}_i^j(0), j\in [n_i]$; a stepsize sequence $\{\gamma_t\}$.  

\noindent
{\bf For} $t=1,2,\ldots$, player $i$ performs the following steps:

\begin{itemize}
\item Compute the averaged estimates using weights \eqref{eq:weights} as
\begin{align}\label{eq:da_c}
x^j_i(t)&=\sum_{i',j'} w^{j,j'}_{i,i'}\tilde{x}^{j'}_{i'}(t)\ \ \forall j\in [n_i].
\end{align}
\item Update the estimate vectors at the next time step by
\begin{align}\label{eq:da}
    &\tx_i^j(t+1)= \Proj_{\hD}\left[x_i^j(t) - \gamma_t \nabla l_{i,j}^{\tau}(x_i^j(t))\right]\ \forall j.
\end{align}
\end{itemize}
\end{algorithm}

We note that based on the weights $w^{j,j'}_{i,i'}$ defined in \eqref{eq:weights}, one can define an extended communication graph defined on $n = \sum_{i=1}^N n_i$ nodes, with the symmetric mixing matrix $W'$ defined by its elements in~\eqref{eq:weights}.
Then, the procedure~\eqref{eq:da_c}-\eqref{eq:da} is a deterministic version of the projected gradient-based optimization procedure  in~\cite{Bianchi} (see the procedure~(2) therein) for a distributed approach solving the non-convex problem: $\min_{x\in\hat\Delta}\ l^{\tau}(x)$, where $l^{\tau}(x)$ is defined in~\eqref{eq:l}. Let $\EuScript L^{\tau}$ denote the set of the stationary points of the function $l^{\tau}(x)$ on the set $\hD$, i.e.,
\begin{align}\nonumber
    \EuScript L^{\tau} = \big\{x\in\hD: \, \langle\nabla l^{\tau}(x), y-x\rangle\ge 0 \quad \mbox{for any }y\in\hD \big\}.
\end{align}
Now, we have the following result from \cite{Bianchi}.
\begin{theorem}\label{th:bianchi}[Deterministic version of \cite[Theorem 1]{Bianchi}]
Let Assumption~\ref{assum:comm} hold. Then, under the choice of communication weights in~\eqref{eq:weights}, and given that $\sum_{t=1}^{\infty} \gamma_t = \infty$ and $\sum_{t=1}^{\infty} \gamma_t^2 < \infty$, as $t \to \infty$, all $\tx_{(i,j)}(t), i \in [N], j \in [n_i]$, updated by the procedures~\eqref{eq:da_c}-\eqref{eq:da}, converge to the same point in the set $\EuScript{L}^{\tau}$.
\end{theorem}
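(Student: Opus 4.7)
The plan is to recognize Algorithm~\ref{alg:GD} as the deterministic specialization of the distributed projected gradient procedure analyzed in \cite{Bianchi}, running on the augmented graph with $n = \sum_i n_i$ nodes indexed by pairs $(i,j)$ with $j \in [n_i]$, and then adapt the consensus-plus-descent proof template of that reference. The first preparatory step is to verify the structural hypotheses the machinery requires of the extended mixing matrix $W'$ with entries~\eqref{eq:weights}: symmetry, preservation of the relevant average under updates, and a spectral gap $\rho := \|W' - \tfrac{1}{n}\boldsymbol{1}\boldsymbol{1}^{\top}\|_2 < 1$. The latter follows from the strictly positive self-loop weights $\beta/n_i$ combined with the connectivity of $G_w$ granted by Assumption~\ref{assum:comm}, which together yield primitivity of the extended graph.

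Next I would carry out the consensus analysis. Writing $\bar{x}(t) = \tfrac{1}{n} \sum_{i,j} \tx_i^j(t)$ and $e_i^j(t) = \tx_i^j(t) - \bar{x}(t)$, the nonexpansiveness of $\Proj_{\hD}$ (since $\hD$ is convex) and the geometric mixing of $W'$ yield the standard telescoped bound $\|e(t)\| \le C \sum_{s<t} \rho^{\,t-s} \gamma_s$, where $C$ uniformly bounds $\|\nabla l_{i,j}^{\tau}\|$ over the compact set $\hD$. Combined with $\sum_t \gamma_t^2 < \infty$, this delivers $\sum_t \gamma_t \|e(t)\|^2 < \infty$ and, in particular, $e(t) \to 0$, so all local estimates asymptotically agree.

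For the descent step, I would track $\bar{x}(t)$ and show that it is, up to asymptotically negligible perturbations, a projected gradient step on the aggregate objective $l^{\tau}(x) = \tfrac{1}{n} \sum_{i,j} l_{i,j}^{\tau}(x)$. Because each $l_{i,j}^{\tau}$ is smooth on the compact set $\hD$, its gradient is Lipschitz there, so the bias from evaluating gradients at $x_i^j(t)$ rather than $\bar{x}(t)$ is $O(\|e(t)\|)$. The descent lemma applied to $l^{\tau}$ then yields
\begin{align*}
l^{\tau}(\bar{x}(t+1)) \le l^{\tau}(\bar{x}(t)) - c \gamma_t \|\mathcal{G}_{\gamma_t}(\bar{x}(t))\|^2 + O\!\left(\gamma_t \|e(t)\| + \gamma_t^2\right),
\end{align*}
with $\mathcal{G}_{\gamma_t}$ the projected gradient mapping at step $\gamma_t$. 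Summing, and using Cauchy--Schwarz to convert the consensus bound into $\sum_t \gamma_t \|e(t)\| < \infty$ together with $\sum_t \gamma_t = \infty$, forces $\liminf_t \|\mathcal{G}_{\gamma_t}(\bar{x}(t))\| = 0$. Compactness of $\hD$ then places every accumulation point of $\bar{x}(t)$ into $\EuScript{L}^{\tau}$, and the vanishing disagreement transfers this conclusion to every $\tx_i^j(t)$.

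The main obstacle is the non-convexity of the objectives: nothing in the descent inequality by itself gives uniqueness of the limit, and the argument so far only yields set-valued convergence of $\bar{x}(t)$ to $\EuScript{L}^{\tau}$. Upgrading this to convergence to a single common point (as claimed) would follow \cite{Bianchi} by noting that $l^{\tau}(\bar{x}(t))$ is monotone up to summable error, so its limit $l^{\infty}$ exists; the accumulation set is then a connected subset of $(l^{\tau})^{-1}(l^{\infty}) \cap \EuScript{L}^{\tau}$, and a mild regularity assumption on $l^{\tau}$ (e.g., critical values having empty interior, which holds here because $l^{\tau}$ is analytic and Sard's theorem applies) collapses this set to a singleton. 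Keeping the $O(\|e(t)\|)$ bias harmless across the three stages (consensus, descent, limit identification) is the most delicate bookkeeping step, but it is exactly the form handled in \cite{Bianchi}, so the proof reduces to auditing that the noise-free specialization of each estimate carries over.
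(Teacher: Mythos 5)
First, a point of comparison: the paper does not actually prove this theorem. It is stated as the deterministic specialization of \cite[Theorem~1]{Bianchi} and imported wholesale, so there is no in-paper argument to check you against. Your reconstruction follows the same template as the cited reference (consensus contraction on the extended graph, perturbed projected descent on the network average, identification of the limit set), which is the right skeleton. However, as a standalone proof it has three concrete gaps. (i) You defer the verification of the structural hypotheses on $W'$, but as the weights are literally written in~\eqref{eq:weights} that verification fails: the row of $W'$ indexed by a node $(i,j)$ with $j\neq 1$ contains only the intra-player entries $\beta/n_i$ and hence sums to $\beta<1$, so $W'$ is sub-stochastic, the average $\bar{x}(t)$ is not preserved by the mixing step, and the spectral-gap bound $\|W'-\tfrac1n\boldsymbol{1}\boldsymbol{1}^{\top}\|_2<1$ you invoke does not apply verbatim. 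The weights must be repaired (or the consensus argument rerun for a sub-stochastic, non-average-preserving matrix) before the bound $\|e(t)\|\le C\sum_{s<t}\rho^{t-s}\gamma_s$ is available. (ii) The step from $\liminf_t\|\mathcal{G}_{\gamma_t}(\bar{x}(t))\|=0$ to ``every accumulation point of $\bar{x}(t)$ lies in $\EuScript{L}^{\tau}$'' is a non sequitur: a vanishing liminf only certifies that \emph{some} subsequential limit is stationary. Upgrading this requires showing that $l^{\tau}(\bar{x}(t))$ converges and then arguing by contradiction from the descent inequality (or invoking the ODE method as in \cite{Bianchi}) to rule out non-stationary accumulation points.

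(iii) The final upgrade to a \emph{single} common limit is the weakest link, and your proposed fix does not close it: Sard's theorem gives that the critical values of $l^{\tau}$ have measure zero, and connectedness of the accumulation set then places it inside one connected component of $\EuScript{L}^{\tau}\cap(l^{\tau})^{-1}(l^{\infty})$ --- but a connected component of a critical set need not be a singleton (consider a curve of critical points at a common value). The cited reference handles this by an explicit regularity assumption on the set of KKT values/points; to prove pointwise convergence here you would need either to import that assumption, or to exploit the specific polynomial-plus-logarithmic structure of $l^{\tau}$ on $\hD$ (which is real-analytic since $\hD$ is bounded away from the boundary of the simplex) via a \L{}ojasiewicz-type argument. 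Also note, as a smaller bookkeeping item, that $\bar{x}(t+1)$ is the average of projections rather than the projection of the average, so the claimed $O(\gamma_t\|e(t)\|+\gamma_t^2)$ perturbation in the descent inequality needs the standard extra estimate comparing these two quantities; you flag this as delicate but do not supply it.
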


Having the above result in hand and taking into account the existence of a common stationary point $x \in \EuScript{S}^{\tau}$ of the functions $l_{i,j}^{\tau}$ on the set $\hD$, i.e., $\EuScript{S}^{\tau} \neq \emptyset$ and $\EuScript{S}^{\tau} \subseteq \EuScript{L}^{\tau}$ (see~\eqref{eq:statset} and Lemma~\ref{lem:stationary}), if we let each player follow Algorithm \ref{alg:GD} with some initial estimates $\tx^j_{i}(0)$ and use the sequence of step sizes $\{\gamma_t\}$ satisfying Theorem~\ref{th:bianchi}, then the iterates are guaranteed to converge to a common point in the set $\EuScript{L}^{\tau}$. Thus, this point will either be from the set $\EuScript{S}^{\tau}$ or from the set $\EuScript{L}^{\tau} \setminus \EuScript{S}^{\tau}$. In the former case, according to Proposition~\ref{prop:1}, an $\epsilon$-NE is achieved. However, in the latter case, one should rely on some heuristic algorithm to further steer the dynamics to a point in $\EuScript{S}^{\tau}$. We should emphasize again that, according to the complexity result of Remark~\ref{rem:compl}, one cannot expect an efficient calculation of a point from $\EuScript{S}^{\tau}$ unless further assumptions are imposed on the game. For instance, one heuristic could be to check the condition in Lemma~\ref{lem:solutionsQRE} after some sufficiently large number of iterates $T$. If there exists at least one player $i \in [N]$ such that $\|\nabla l_{i,j}^{\tau}(\tx_{i}^j(T))\| > \epsilon_0$ for some $j \in [n_i]$, given some fixed threshold $\epsilon_0 > 0$, the procedure will be restarted with a random selection of initial estimates $\tx_{i}^{'j}(0) \neq \tx_{i}^j(0)$ for $i \in [N], j \in [n_i]$.

\subsection{A Distributed Fixed-Point Approximation Algorithm}

Our second proposed algorithm relies on the result of Lemma \ref{lem:solutionsQRE} to directly approximate the fixed point of the best response dynamics in the game $\Gamma^{\tau}$. More precisely, the algorithm aims to solve the system of equations $\nabla l^{\tau}_{ij}(x) = 0$ for each $i,j$ in a distributed manner. After each update, the players average their estimates with those of others to ensure they converge to the same point in the limit. The overall procedure is summarized in Algorithm \ref{alg:fix}. As before, we use the notation $\tilde{x}_i^{j}$ to denote the $j$th estimate of the joint action that player $i$ processes as time progresses, whereas $\tilde{x}_i^{j,\ell}$, $\ell \in [n_i]$, represents the local coordinates updated by player $i$ in the estimate vector $\tilde{x}_i^{j}$.

\begin{prop}
Let Assumption~\ref{assum:comm} hold. If the dynamics of Algorithm \ref{alg:fix} converge, they converge to an $\epsilon$-NE of the game $\Gamma$.    
\end{prop}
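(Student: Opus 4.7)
The plan is to exploit the convergence hypothesis together with the chain of earlier lemmas via three clean steps. First, I would use Assumption~\ref{assum:comm} to establish consensus at the limit: because the extended mixing matrix built from the weights in~\eqref{eq:weights} is symmetric, non-negative, with constant row sums over a connected graph, the averaging substep in Algorithm~\ref{alg:fix} forces all local estimates $\tilde{x}_i^j(t)$ to coalesce asymptotically. Combined with the assumed convergence of the dynamics, this produces a single limit point $x^\star\in\hat\Delta$ such that $\lim_{t\to\infty}\tilde{x}_i^j(t)=x^\star$ for every $i\in[N]$ and $j\in[n_i]$.

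Second, I would pass to the limit in the per-coordinate fixed-point update. By the description of Algorithm~\ref{alg:fix}, the update for $\tilde{x}_i^{j,\ell}$ is designed so that any fixed point satisfies $\nabla l^{\tau}_{i,j}(x)=0$. Since $\hat\Delta$ is compact and its defining bounds in~\eqref{eq:extended_Di} keep every coordinate of any $x\in\hat\Delta$ bounded away from $0$ and from $1$, the maps $\nabla l^{\tau}_{i,j}$ are continuous on $\hat\Delta$. Passing to the limit in the update then yields $\nabla l^{\tau}_{i,j}(x^\star)=0$ for every $i\in[N]$ and $j\in[n_i]$.

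Third, I would chain the earlier lemmas to translate this stationarity condition into the $\epsilon$-NE claim. By Lemma~\ref{lem:lem1}, $\nabla l^{\tau}_{i,j}(x^\star)=0$ on $\hat\Delta$ implies $l^{\tau}_{i,j}(x^\star)=0$. Summing over $i,j$ and invoking Lemma~\ref{lem:MO} (equivalently Lemma~\ref{lem:solutionsQRE}), $x^\star$ is a Nash equilibrium of the transformed regularized game $\Gamma^{\tau,t}$. Lifting $x^\star$ through the coordinate change~\eqref{eq:tr} and applying Lemma~\ref{lem:solutionsQRE} identifies it with a point of QRE$(\tau)$ in $\Gamma$. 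Finally, under the standing choice $\tau \le \epsilon/\max_i\{\ln(n_i+1)\}$ (with $\tau$ small enough to meet the bounds in Remark~\ref{rem:tau}), Lemma~\ref{lem:epsApp}(2) yields that $x^\star$ is an $\epsilon$-NE of the original game $\Gamma$.

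The main obstacle is the second step: because the paper only sketches Algorithm~\ref{alg:fix} rather than writing the update explicitly, one has to verify that the consistency condition at any fixed point is genuinely $\nabla l^{\tau}_{i,j}(x^\star)=0$ rather than some weaker projected-stationarity on $\partial\hat\Delta$. The safeguard is that $\hat\Delta$ in~\eqref{eq:extended_Di} is constructed with slack $e^{-1/\tau^2}$ and $e^{-1/\tau^{1.5}}$ that strictly contain QRE$(\tau)$ in its interior (by Lemma~\ref{lem:epsApp}(3) combined with~\eqref{eq:tau1}), so a converged limit that is to be identified with a QRE cannot lie on $\partial\hat\Delta$, ruling out spurious boundary fixed points and reducing the stationarity to the gradient equation required above. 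The remaining consensus and limit-passing steps are routine once this is secured.
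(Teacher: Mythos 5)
Your proposal is correct and follows essentially the same route as the paper's proof: consensus of the estimates via the averaging step under Assumption~\ref{assum:comm}, passing to the limit in the update \eqref{eq:fixed-update} by continuity to obtain the fixed-point condition \eqref{eq:fix-point} (equivalently $\nabla l^{\tau}_{i,j}(x^\star)=0$), and then invoking Lemmas~\ref{lem:lem1}, \ref{lem:MO}/\ref{lem:solutionsQRE}, and \ref{lem:epsApp} to conclude the limit is an $\epsilon$-NE. Your worry about the update being only ``sketched'' is moot, since \eqref{eq:fixed-update} is written explicitly and its fixed points are exactly the solutions of \eqref{eq:fix-point}.
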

\begin{proof}
Using \eqref{eq:nabl0}, one can solve for the solutions of $\nabla l^{\tau}_{ij}(x)= 0$ to obtain
\begin{align}\label{eq:fix-point}
x_i^j = \frac{\exp\left(\frac{1}{\tau}(\hat{r}_i^j + a^j_i x_{-i})\right)}{1 + \sum_{\ell} \exp\left(\frac{1}{\tau}(\hat{r}^{\ell}_i + a^{\ell}_i x_{-i})\right)} \quad \forall i,j,    
\end{align}
where $a_i^j$ is shorthand for the vector $(\hat{q}^{j}_{ik}, k \neq i)$. Moreover, by Lemma \ref{lem:solutionsQRE}, a solution to this system of nonlinear equalities will be an $\epsilon$-NE.  

Now, if the dynamics of Algorithm \ref{alg:fix} converge to some $x$ for all $i\in[N]$, $j\in[n_i]$, i.e., $\lim_{t \to \infty} \tilde{x}^j_i(t) = x \ \forall i,j$, then, due to the averaging step in \eqref{eq:average-fixed} and Assumption \ref{assum:comm}, the estimates of all the players must converge to the same point. That is, $\lim_{t \to \infty} \tilde{x}^j_i(t) = x\ \forall i,j$, and in particular, $\lim_{t \to \infty} x^j_i(t) = x\ \forall i,j$. Finally, by taking the limit of both sides of the update dynamics in \eqref{eq:fixed-update} and using the continuity of the functions, one can see that the limit point $x$ must satisfy \eqref{eq:fix-point}, and hence it must be an $\epsilon$-NE.  
\end{proof}

\begin{algorithm}[t]\caption{Fixed-Point Approximation for Player $i$}\label{alg:fix}
{\bf Input:} Each player $i$ process $n_i$ estimates of the joint action with initial values $\tilde{x}_i^j(0)\in \hD, j\in [n_i]$.  

\noindent
{\bf For} $t=1,2,\ldots$, player $i$ performs the following steps:

\begin{itemize}
\item Compute the averaged estimates using weights \eqref{eq:weights} as
\begin{align}\label{eq:average-fixed}
x^j_i(t)&=\sum_{i',j'} w^{j,j'}_{i,i'}\tilde{x}^{j'}_{i'}(t)\ \ \forall j\in [n_i].
\end{align}
\item Update the local coordinates in the estimate vectors at the next time step by
\begin{align}\label{eq:fixed-update}
\!\!\!\!\tilde{x}^{j,\ell}_i(t+1)\!=\!\frac{\exp(\frac{1}{\tau}(\hat{r}_i^{\ell}+a^j_i x_{-i}(t)))}{1+\sum_{\ell} \exp(\frac{1}{\tau}(\hat{r}^{\ell}_i+a^{\ell}_ix_{-i}(t)))}\ \forall j,\ell\in[n_i],
\end{align}
where $a_i^{\ell}=(\hat{q}^{\ell}_{ik}, k\neq i)$.
\end{itemize}
\end{algorithm}

Finally, we note that while the convergence of Algorithm \ref{alg:fix} is not always guaranteed, since the sequence $\{\tilde{x}_j^i(t)\}$ belongs to a closed and bounded set, it has a limit point. Therefore, given existence of a common limit point, at least a subsequence of the iterates generated by Algorithm \ref{alg:fix} will converge to an $\epsilon$-NE.

\section{Simulation Results}\label{sec:V}
We consider a two-player, non-zero sum game, which can arise, for example, in a conflict situation on roads where two vehicles, the players in the game, interact at a non-signalized intersection. In such a situation, each player is assumed to choose between two actions: either ``go" or ``stop." The payoff matrices of the players are presented in Table~\ref{tab:game}.

\begin{table}
\caption{Payoff/utility matrices.}
\begin{center}
\begin{tabular}{ | m{1cm} | m{1cm}| m{1cm} | } 
  \hline
  & go & stop \\ 
  \hline
  go  & (-6,-6) & (1,-4) \\ 
  \hline
  stop & (-4,1) & (0,0) \\ 
  \hline
\end{tabular}\label{tab:game}
\end{center}
\end{table}

\begin{figure*}[!h]
	\centering
	\begin{minipage}[t]{0.45\textwidth}
		\centering
		\includegraphics[width=\textwidth]{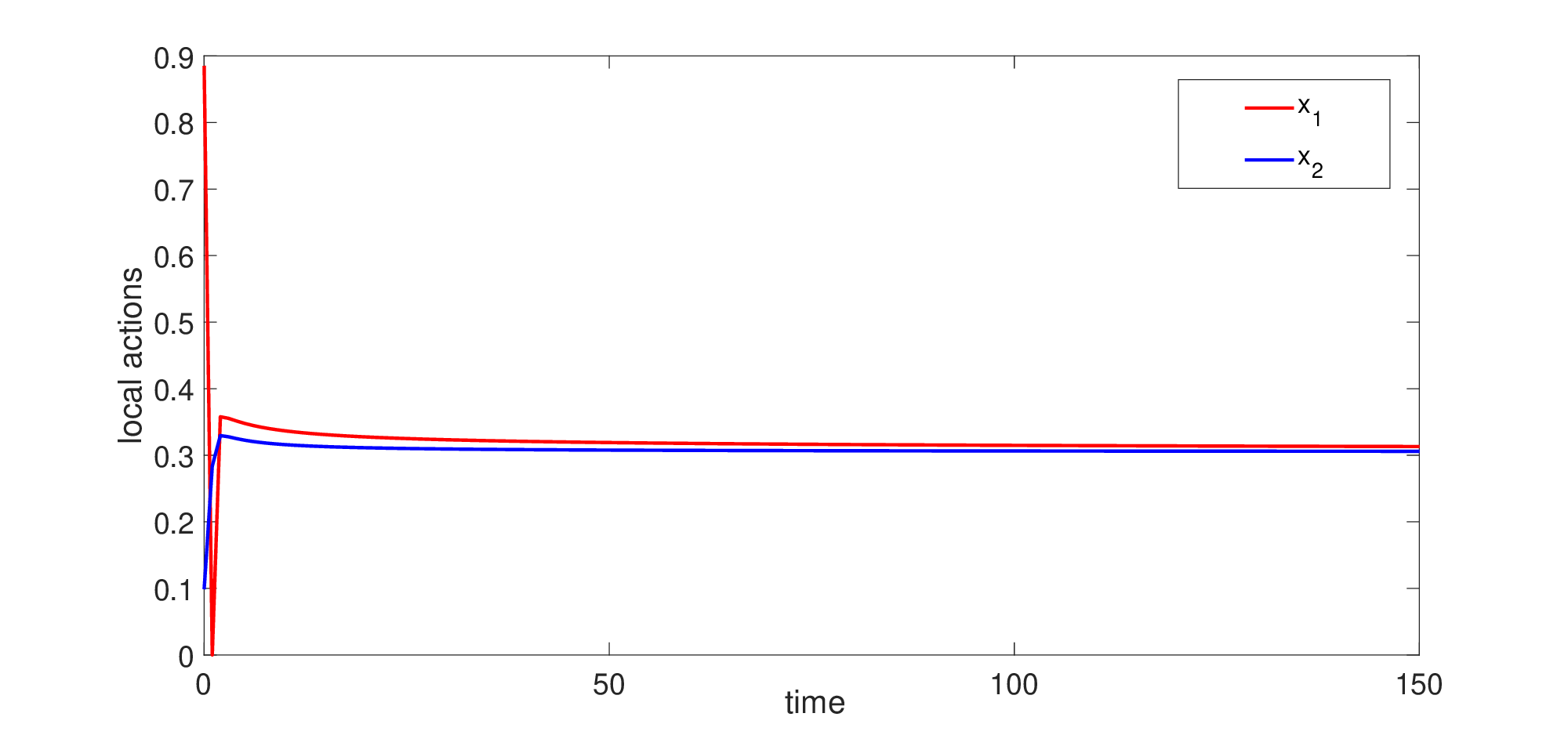}
		\label{fig:d1point}
	\end{minipage}
	\hfill
	\begin{minipage}[t]{0.45\textwidth}
		\centering
		\includegraphics[width=\textwidth]{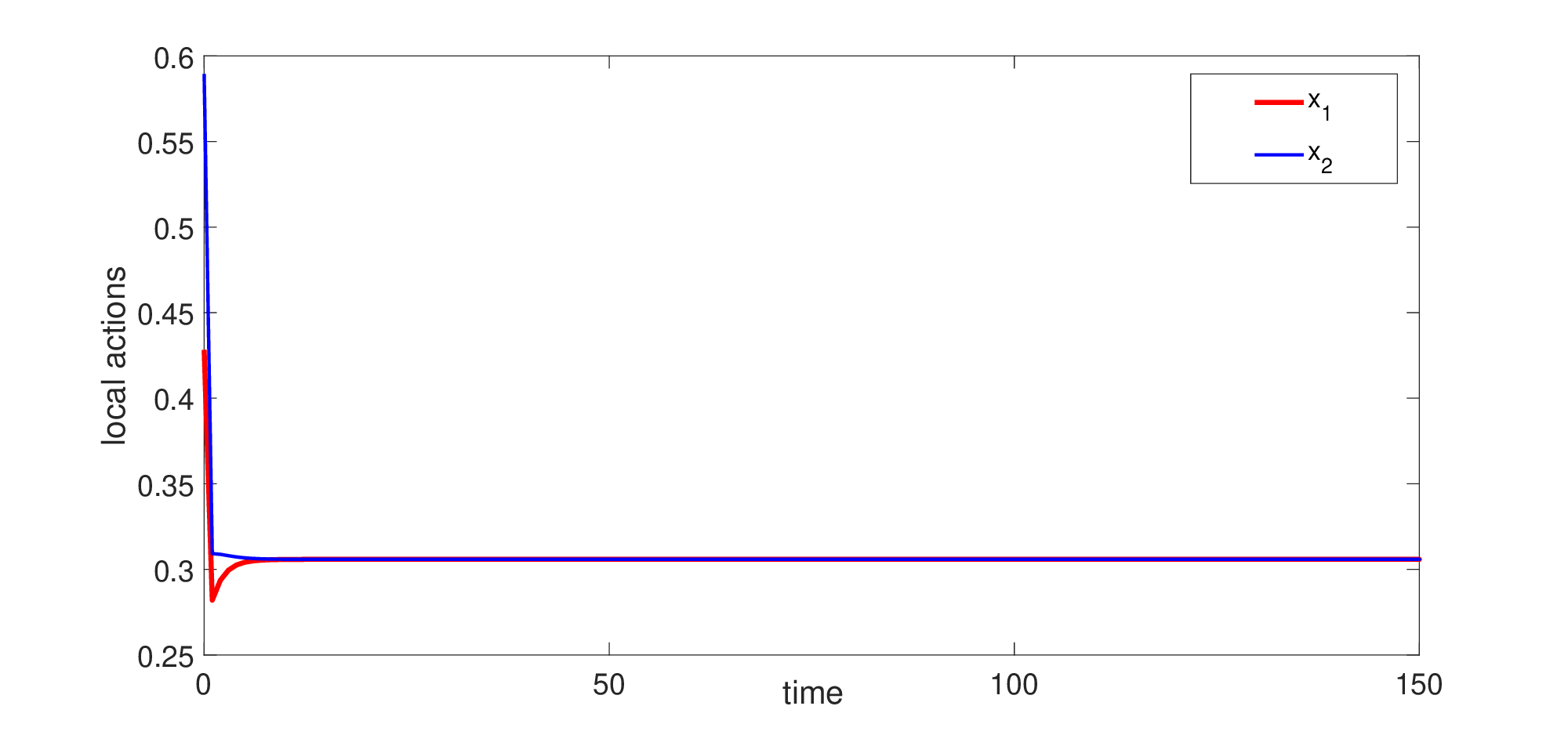}
			\label{fig:alg}
	\end{minipage}
	\caption{Local actions $x_1(t)$ and $x_2(t)$ updated within the estimate vectors $\tilde{x}_1(t) = (x_1(t),\tilde{x}_1^2(t))$ and $\tilde{x}_2(t) = (\tilde{x}_2^1(t), x_2(t))$ in the run of Algorithm~1 (left) and of Algorithm~2 (right).}
	\label{fig:distance}
\end{figure*}

The direct calculation of the regularized utility functions in the mixed strategies leads to the following multi-objective optimization problem: 
\begin{align*}
    &l^{\tau}_1(x_1,x_2) = (-3x_2-4-\tau \ln x_1 + \tau\ln(1-x_1))^2\to\min,\cr
    &l^{\tau}_1(x_1,x_2) = (-3x_1-4-\tau \ln x_2 + \tau\ln(1-x_2))^2\to\min,\cr
    &\mbox{s.t. $(x_1,x_2)\in\hD$},
\end{align*}
where $\hD$ is defined according to~\eqref{eq:extended_D}, and $x_1$ and $x_2$ are the probabilities of taking the action ``go" for players $i=1$ and $i=2$, respectively. Player 1 controls the function $l^{\tau}_1$, whereas Player 2 has access to the function $l^{\tau}_2$. In this setting, we run Algorithms 1 and 2. In the simulations, we scale the utility functions by a factor of 6, set $\gamma_t = \frac{5}{t}$, $w_{ij}=0.5$ for all $i,j$, and randomly initialize the two-dimensional vector of joint action estimates, i.e., $\tilde{x}_1(0)$ and $\tilde{x}_2(0)$ for both players. The simulation results are shown in Figure~\ref{fig:alg}. As we can see, both algorithms converge to a neighborhood of the mixed NE $x_1 = x_2 = \frac{1}{3}$ after a few iterations.

\section{Conclusion}\label{sec:VI}
This work formulates a multi-objective optimization approach to approximate Nash equilibria in a specific class of convex games over the simplex. The main advantage of this formulation lies in the fact that the common stationary points are zeros of the objective functions as well as their gradients, and they coincide with the approximate Nash equilibria in the game under consideration. These characteristics of the approximate Nash equilibria allow for the application of heuristics for their computation. Future work will focus on more sophisticated computation techniques with theoretical guarantees regarding subsequence convergence to an approximate Nash equilibrium.

\bibliographystyle{plain}
\bibliography{CrowdSGames_ref}

\begin{thebibliography}{10}
\providecommand{\url}[1]{#1}
\csname url@samestyle\endcsname
\providecommand{\newblock}{\relax}
\providecommand{\bibinfo}[2]{#2}
\providecommand{\BIBentrySTDinterwordspacing}{\spaceskip=0pt\relax}
\providecommand{\BIBentryALTinterwordstretchfactor}{4}
\providecommand{\BIBentryALTinterwordspacing}{\spaceskip=\fontdimen2\font plus
\BIBentryALTinterwordstretchfactor\fontdimen3\font minus
  \fontdimen4\font\relax}
\providecommand{\BIBforeignlanguage}[2]{{%
\expandafter\ifx\csname l@#1\endcsname\relax
\typeout{** WARNING: IEEEtran.bst: No hyphenation pattern has been}%
\typeout{** loaded for the language `#1'. Using the pattern for}%
\typeout{** the default language instead.}%
\else
\language=\csname l@#1\endcsname
\fi
#2}}
\providecommand{\BIBdecl}{\relax}
\BIBdecl

\bibitem{Alpcan2005}
T.~Alpcan and T.~Ba{\c{s}}ar, ``{D}istributed {A}lgorithms for {N}ash
  {E}quilibria of {F}low {C}ontrol {G}ames,'' in \emph{Advances in Dynamic
  Games}.\hskip 1em plus 0.5em minus 0.4em\relax Springer, 2005, pp. 473--498.

\bibitem{GTM}
N.~Li, Y.~Yao, I.~Kolmanovsky, E.~Atkins, and A.~R. Girard, ``Game-theoretic
  modeling of multi-vehicle interactions at uncontrolled intersections,''
  \emph{IEEE Transactions on Intelligent Transportation Systems}, vol.~23,
  no.~2, pp. 1428--1442, 2022.

\bibitem{BasharSG}
W.~Saad, H.~Zhu, H.~V. Poor, and T.~Ba{\c{s}}ar, ``Game-theoretic methods for
  the smart grid: An overview of microgrid systems, demand-side management, and
  smart grid communications,'' \emph{IEEE Signal Processing Magazine}, vol.~29,
  no.~5, pp. 86--105, 2012.

\bibitem{FaccPang1}
J.-S. Pang and F.~Facchinei, \emph{Finite-dimensional variational inequalities
  and complementarity problems : vol. 1}, ser. Springer series in operations
  research.\hskip 1em plus 0.5em minus 0.4em\relax New York, Berlin,
  Heidelberg: Springer, 2003.

\bibitem{LanExtrapolation}
\BIBentryALTinterwordspacing
G.~Kotsalis, G.~Lan, and T.~Li, ``Simple and optimal methods for stochastic
  variational inequalities, i: Operator extrapolation,'' \emph{SIAM Journal on
  Optimization}, vol.~32, no.~3, pp. 2041--2073, 2022. [Online]. Available:
  \url{https://doi.org/10.1137/20M1381678}
\BIBentrySTDinterwordspacing

\bibitem{Nesterov}
Y.~Nesterov and L.~Scrimali, ``Solving strongly monotone variational and
  quasi-variational inequalities,'' \emph{Discrete and Continuous Dynamical
  Systems - A}, vol.~31, no.~4, pp. 1383--1396, 2011.

\bibitem{He}
\BIBentryALTinterwordspacing
Y.~He, ``Solvability of the minty variational inequality,'' \emph{J. Optim.
  Theory Appl.}, vol. 174, no.~3, p. 686–692, Sep. 2017. [Online]. Available:
  \url{https://doi.org/10.1007/s10957-017-1124-1}
\BIBentrySTDinterwordspacing

\bibitem{gao2021second}
B.~Gao and L.~Pavel, ``Second-order mirror descent: exact convergence beyond
  strictly stable equilibria in concave games,'' in \emph{2021 60th IEEE
  Conference on Decision and Control (CDC)}.\hskip 1em plus 0.5em minus
  0.4em\relax IEEE, 2021, pp. 948--953.

\bibitem{pethick2023solving}
\BIBentryALTinterwordspacing
T.~Pethick, O.~Fercoq, P.~Latafat, P.~Patrinos, and V.~Cevher, ``Solving
  stochastic weak minty variational inequalities without increasing batch
  size,'' in \emph{The Eleventh International Conference on Learning
  Representations}, 2023. [Online]. Available:
  \url{https://openreview.net/forum?id=ejR4E1jaH9k}
\BIBentrySTDinterwordspacing

\bibitem{TCNS24}
T.~Tatarenko and M.~Kamgarpour, ``Payoff-based learning of nash equilibria in
  merely monotone games,'' \emph{IEEE Transactions on Control of Network
  Systems}, vol.~11, no.~4, pp. 1790--1799, 2024.

\bibitem{ecc24}
T.~Tatarenko and A.~Nedi\'c, ``Accelerating distributed {N}ash equilibrium
  seeking,'' in \emph{2024 European Control Conference (ECC)}, 2024, pp.
  323--328.

\bibitem{Chen}
\BIBentryALTinterwordspacing
X.~Chen, X.~Deng, and S.-H. Teng, ``Settling the complexity of computing
  two-player nash equilibria,'' \emph{J. ACM}, vol.~56, no.~3, May 2009.
  [Online]. Available: \url{https://doi.org/10.1145/1516512.1516516}
\BIBentrySTDinterwordspacing

\bibitem{anagnostides2022optimistic}
\BIBentryALTinterwordspacing
I.~Anagnostides, G.~Farina, I.~Panageas, and T.~Sandholm, ``Optimistic mirror
  descent either converges to {N}ash or to strong coarse correlated equilibria
  in bimatrix games,'' in \emph{Advances in Neural Information Processing
  Systems}, A.~H. Oh, A.~Agarwal, D.~Belgrave, and K.~Cho, Eds., 2022.
  [Online]. Available: \url{https://openreview.net/forum?id=evRyKOjOx20}
\BIBentrySTDinterwordspacing

\bibitem{patris2024learning}
\BIBentryALTinterwordspacing
N.~Patris and I.~Panageas, ``Learning {N}ash equilibria in {R}ank-1 games,'' in
  \emph{The Twelfth International Conference on Learning Representations},
  2024. [Online]. Available: \url{https://openreview.net/forum?id=8utTlmhw8v}
\BIBentrySTDinterwordspacing

\bibitem{pmlr-v97-raghunathan19a}
\BIBentryALTinterwordspacing
A.~Raghunathan, A.~Cherian, and D.~Jha, ``Game theoretic optimization via
  gradient-based {N}ikaido-{I}soda function,'' in \emph{Proceedings of the 36th
  International Conference on Machine Learning}, ser. Proceedings of Machine
  Learning Research, K.~Chaudhuri and R.~Salakhutdinov, Eds., vol.~97.\hskip
  1em plus 0.5em minus 0.4em\relax PMLR, 09--15 Jun 2019, pp. 5291--5300.
  [Online]. Available:
  \url{https://proceedings.mlr.press/v97/raghunathan19a.html}
\BIBentrySTDinterwordspacing

\bibitem{gemp2024approximating}
\BIBentryALTinterwordspacing
I.~Gemp, L.~Marris, and G.~Piliouras, ``Approximating {N}ash equilibria in
  normal-form games via stochastic optimization,'' in \emph{The Twelfth
  International Conference on Learning Representations}, 2024. [Online].
  Available: \url{https://openreview.net/forum?id=cc8h3I3V4E}
\BIBentrySTDinterwordspacing

\bibitem{arxiv_Matching}
\BIBentryALTinterwordspacing
S.~R. Etesami and R.~Srikant, ``Decentralized and uncoordinated learning of
  stable matchings: A game-theoretic approach,'' in \emph{arXiv,2407.21294},
  2024. [Online]. Available: \url{https://arxiv.org/abs/2407.21294}
\BIBentrySTDinterwordspacing

\bibitem{rosen1965existence}
J.~B. Rosen, ``Existence and uniqueness of equilibrium points for concave
  $n$-person games,'' \emph{Econometrica: Journal of the Econometric Society},
  pp. 520--534, 1965.

\bibitem{GRAMMATICO2018186}
\BIBentryALTinterwordspacing
S.~Grammatico, ``Comments on “distributed robust adaptive equilibrium
  computation for generalized convex games” [automatica 63 (2016) 82–91],''
  \emph{Automatica}, vol.~97, pp. 186--188, 2018. [Online]. Available:
  \url{https://www.sciencedirect.com/science/article/pii/S0005109818303959}
\BIBentrySTDinterwordspacing

\bibitem{Bianchi}
P.~Bianchi and J.~Jakubowicz, ``Convergence of a multi-agent projected
  stochastic gradient algorithm for non-convex optimization,'' \emph{IEEE
  Transactions on Automatic Control}, vol.~58, no.~2, pp. 391--405, 2013.

\bibitem{NedOzPar}
A.~Nedich, A.~Ozdaglar, and P.~A. Parrilo, ``Constrained consensus and
  optimization in multi-agent networks,'' \emph{IEEE Transactions on Automatic
  Control}, vol.~55, no.~4, pp. 922--938, 2010.

\end{thebibliography}

\end{document}